\newtheorem {thm}{Theorem}[section]
\newtheorem*{thmstar}{Theorem}
\newtheorem{prop}[thm]{Proposition}
\newtheorem {lem}[thm]{Lemma}
\newtheorem{ques}[thm]{Question}
\theoremstyle{remark}
\newtheorem{rem}[thm]{Remark}
\newtheorem{np*}{Non-Proof}
\theoremstyle{definition}
\newtheorem{defn}[thm]{Definition}
\newtheorem{exam}[thm]{Example}
\newcommand{\pd}[2]{\frac{\partial #1}{\partial #2}}
\def\Ind{\setbox0=\hbox{$x$}\kern\wd0\hbox to 0pt{\hss$\mid$\hss} \lower.9\ht0\hbox to 0pt{\hss$\smile$\hss}\kern\wd0}
\def\Notind{\setbox0=\hbox{$x$}\kern\wd0\hbox to 0pt{\mathchardef \nn=12854\hss$\nn$\kern1.4\wd0\hss}\hbox to 0pt{\hss$\mid$\hss}\lower.9\ht0 \hbox to 0pt{\hss$\smile$\hss}\kern\wd0}
\def\ind{\mathop{\mathpalette\Ind{}}}
\newcommand{\m}{\mathbb }
\newcommand{\mc}{\mathcal }
\newcommand{\mf}{\mathfrak }
\title{Bertini theorems for differential algebraic geometry}
\author{James Freitag} \thanks{This material is based upon work supported by the National Science Foundation Mathematical Sciences Postdoctoral Research Fellowship, award number 1204510 and an American Mathematical Society Mathematical Research Communities Travel Grant.} 
\address{freitag@math.berkeley.edu \\
Department of Mathematics\\
University of California, Berkeley\\
970 Evans Hall\\
Berkeley, CA 94720-3840 }
\begin{document}
\begin{abstract}
We study intersection theory for differential algebraic varieties. Particularly, we study families of differential hypersurface sections of arbitrary affine differential algebraic varieties over a differential field. We prove the differential analogue of Bertini's theorem, namely that for an arbitrary geometrically irreducible differential algebraic variety which is not an algebraic curve, generic hypersurface sections are geometrically irreducible and codimension one. Surprisingly, we prove a stringer result in the case that the order of the differential hypersurface is at least one; namely that the generic differential hypersurface sections of an irreducible differential algebraic variety are irreducible and codimension one. We also calculate the Kolchin polynomials of the intersections and prove several other results regarding intersections of differential algebraic varieties. 
\\
MSC2010 classification: 03C60, 03C98, 12H05
\end{abstract}
\maketitle

\section{Introduction}
Consider the following theorem from algebraic geometry:
\begin{thmstar} \cite[7.1, page 48]{Hartshorne} \label{ZINT} Let $Y,Z$ be irreducible algebraic varieties of dimensions $r,s$ in $\m A^n$ Then every irreducible component $W$ of $Y \cap Z$ has dimension greater than or equal to $r+s-n$. 
\end{thmstar}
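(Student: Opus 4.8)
The plan is to follow the classical \emph{reduction to the diagonal}, which converts the intersection of two varieties into the intersection of a single variety with $n$ hypersurfaces, and then to invoke Krull's principal ideal theorem $n$ times.

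First I would record the isomorphism
\[
Y \cap Z \;\cong\; (Y \times Z) \cap \Delta ,
\]
where $\Delta = \{(x,y) \in \m A^n \times \m A^n : x = y\}$ is the diagonal in $\m A^{2n}$ and the isomorphism sends $p \in Y \cap Z$ to $(p,p)$. The point of this move is that $Y \times Z$ is again irreducible (a product of irreducible varieties over an algebraically closed field is irreducible) and has dimension $r+s$, while $\Delta$ is cut out in $\m A^{2n}$ by exactly the $n$ regular functions $g_i := x_i - y_i$, $i = 1, \dots, n$. Since the displayed map is an isomorphism of varieties it induces a dimension-preserving bijection on irreducible components, so it suffices to bound from below the dimension of every component of $(Y \times Z) \cap V(g_1, \dots, g_n)$.

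Next I would isolate the one-hypersurface step as a lemma: if $X \subseteq \m A^m$ is an irreducible variety of dimension $d$ and $f$ is any regular function, then every irreducible component of $X \cap V(f)$ has dimension $\geq d-1$. When $f$ vanishes identically on $X$ this is trivial; otherwise it is Krull's Hauptidealsatz in the coordinate ring of $X$, forcing every minimal prime over $(f)$ to have height at most $1$, hence every component of $X \cap V(f)$ to have dimension exactly $d-1$. This is the only commutative-algebra input, it is standard, and I do not expect it to be the obstacle.

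Finally I would run an induction on $k = 0, 1, \dots, n$ with inductive hypothesis: every irreducible component of $W_k := (Y \times Z) \cap V(g_1, \dots, g_k)$ has dimension $\geq r+s-k$. The base case $k=0$ is $\dim(Y\times Z) = r+s$. For the step, write $W_k$ as the union of its components $C$, each of dimension $\geq r+s-k$; since $W_{k+1} = \bigcup_C (C \cap V(g_{k+1}))$, every component of $W_{k+1}$ is a component of some $C \cap V(g_{k+1})$, and the lemma gives it dimension $\geq (r+s-k)-1 = r+s-(k+1)$. Taking $k=n$ and transporting back along the isomorphism of the second paragraph yields the claim. If there is any delicate point it is purely bookkeeping — checking that components of $W_{k+1}$ sit among the components of the pieces $C \cap V(g_{k+1})$ and that passing to a closed subvariety only lowers dimension — but no idea beyond the three ingredients above is needed.
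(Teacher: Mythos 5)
Your argument is correct and is precisely the classical proof of this result: reduction to the diagonal in $\m A^{2n}$ (so that $Y\cap Z\cong (Y\times Z)\cap V(x_1-y_1,\ldots,x_n-y_n)$ with $Y\times Z$ irreducible of dimension $r+s$) followed by $n$ applications of Krull's Hauptidealsatz together with the dimension formula for finitely generated domains over a field. The paper itself gives no proof of this statement --- it is quoted from Hartshorne as classical background --- and your proof coincides with the one in that cited source, so there is nothing to add.
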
 
This theorem fails for differential algebraic varieties embedded in affine space, as the following examples show. 
\begin{exam}\label{anomalous} (Ritt's example). We work in $\m A^3$ over an ordinary differential field, $k$. Let $V=Z(f)$, where
$$f(x,y,z)=x^5-y^5+z(x \delta y - y\delta x)^2$$
In fact, though $V$ is the zero set of an absolutely irreducible differential polynomial, it is not irreducible in the Kolchin topology. $V$ has six components.  Let $\mu_5$ denote the set of fifth roots of unity. For each $\zeta \in \mu_5,$ $x -\zeta y$ cuts out a subvariety of $V.$ 
Note that
$$f=(x -\zeta y)(\prod_{\eta \in \mu_5 \backslash \{\zeta \}} (x- \eta y) + z (\delta y (x -\zeta y) - y \delta (x -\zeta y))^2$$ is a preparation eqution for $f$ with respect to $x- \zeta y$ \cite[see chapter 4, section 13]{KolchinDAAG}. Further, one obtains the preparation congruence, $f=(x -\zeta y)(\prod_{\eta \in \mu_5 \backslash \{\zeta \}} (x- \eta y) $ modulo $[x -\zeta y]^2,$ so by the Low Power Theorem \cite[chapter 7]{Ritt} or \cite[chapter 4, section 15]{KolchinDAAG}, $[x- \zeta]$ is the ideal of a component of $V$. The general component is given by the saturation by the separant (with respect to some ranking) of $[f]$. For instance, one possible choice of ranking would yield $[f]:\pd{f}{\delta x} ^\infty = \{ g \, | \, \left( \pd{f}{\delta x} \right)^n g \in [f], \, \text{for some } n \in \m N \}$ as the ideal of the general component.  By the component theorem \cite[Theorem 5, page 185]{KolchinDAAG}, these are the only components.  Now we consider the differential algebraic variety $W$, which is the general component of $V$. Establishing that $(0,0,0) \in W$ can be done by noting that $f$ is differentially homogeneous \footnote{This is an observation of Phyllis Cassidy.} and applying \cite[Proposition 2]{KolchinDiffComp}.  In fact, one can prove that $W \cap H$ consists of precisely $ (0,0,0)$ (see \cite{SitOnlineNotes} for a nice exposition of the proof).  
\end{exam}

\begin{exam} \label{anombuium}
Let $X$ be a projective curve of genus at least two over a differentially closed field $K$ which does not descend to the constants. 
Buium \cite{buium1994geometry} proves that $X$ may be embedded as a differential algebraic variety in projective space over $K$ such that $X$ lies outside of some unique hyperplane. Passing to the affine cone of the projective space again gives an intersection with $r+s-n=1$, but the intersection has dimension $0$ (consisting of the origin). 
\end{exam}
The motivating questions of the paper come from the examples of Ritt and Buium: 
\begin{ques} 
In the space of differential hypersurfaces of a particular order and degree, what is the set of coefficients on which the intersection theorem fails for a given arbitrary differential algebraic variety? 
\end{ques} 

The main thrust of this paper is to provide an answer to this question by proving the differential algebraic analogue of Bertini's theorem. Roughly, we prove that the intersection of an irreducible differential algebraic variety of dimension $d$ and a generic hyperplane is a irreducible differential algebraic variety of dimension $d-1$.

Specifically, if $V$ is given by the differential ideal $\mf I \subseteq K \{\bar y \}$ and $ \bar u$ is a suitably long tuple of independent differential transcendentals over $K$, then we analyze the properties of the ideal $[ \mf I , f_{\bar u}] \subseteq K \langle \bar u \rangle \{y_ 1 , \ldots , y_n \}$ where $f _{\bar u} $ is the differential polynomial given by $u_0 + \sum _{i} u_i m_i (\bar y) $ where the $m_i$ are all of the differential monomials of order and degree bounded by some pair of natural numbers. This analysis was already performed in the ordinary case \cite{GDIT}. In particular, we show that $[\mf I ,f_{\bar u}]$ is a prime differential ideal of dimension one less than $\mf I$. 

This result is the differential algebraic analogue of \cite[Theorem 2 page 54]{hodge1994methods}, which is an algebraic precursor to Bertini's theorem. The main problem with applying this result for making inductive arguments in differential algebraic geometry is that the primality of the ideal holds only in the differential polynomial ring over $K \langle \bar u \rangle$. For various applications, and inductive arguments, one would wish to take the coefficients $\bar u$ in some ambient large differentially closed field and establish the primality of $[\mf I, f_{\bar u }]$ in the polynomial ring over the differentially closed field. This is too much to ask, because the corresponding theorem is not even true for algebraic varieties unless the dimension of $V$ is at least $2$; the intersection of a degree $d$ curve with a generic hyperplane consists $d$ points. Surprisingly, we show that algebraic curves are the \emph{only} obstruction to the theorem. This portion of the argument uses a differential lying-over theorem, a geometric or model theoretic argument about ranks and the algebraic results established for $[\mf I, f_{\bar u}]$. In geometric terms, we prove \emph{geometric irreducibility} results:

\begin{defn}\label{geodef} An affine differential algebraic variety, $V$ over $k$, is geometrically irreducible if $I(V/k')$ is a prime differential ideal for \emph{any} $k'$, a differential field extension of $k$.
\end{defn}

\begin{thm}\label{TheMainTheorem} ($|\Delta | =m$) Let $V$ be a geometrically irreducible affine differential algebraic variety over a $\Delta$-field $K$. Let $H$ be a generic differential hypersurface over $K$. Then $V \cap H$ is a geometrically irreducible differential algebraic variety, which is nonempty just in case $dim(V)>0$. In that case, $V \cap H$ has Kolchin polynomial: $$\omega_{V/ K }(t) -\binom{t+m-h}{m}.$$
\end{thm}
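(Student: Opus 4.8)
The plan is to prove the statement first at the level of the ``most generic'' single equation, and then to descend the coefficients into a differentially closed field. Write $V=Z(\mf I)$ with $\mf I\subseteq K\{\bar y\}$ a prime $\Delta$-ideal, enumerate as $m_1,\dots,m_N$ the differential monomials in $\bar y$ of order at most $h$ and degree at most a fixed bound, let $\bar u=(u_0,u_1,\dots,u_N)$ be differential indeterminates over $K$, and set $f_{\bar u}=u_0+\sum_{i=1}^N u_i\, m_i(\bar y)$, so that the generic hypersurface is $H=Z(f_{\bar u})$. The first step is the ``affine'' Bertini statement: $[\mf I,f_{\bar u}]$ is a prime $\Delta$-ideal of $K\langle\bar u\rangle\{\bar y\}$, and the corresponding coordinate ring has Kolchin polynomial $\omega_{V/K}(t)-\binom{t+m-h}{m}$. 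For $|\Delta|=1$ this is exactly the content of the analysis carried out in \cite{GDIT}, and I would transpose that argument to $|\Delta|=m$ essentially verbatim: primality follows from the observation that a generic $K\langle\bar u\rangle$-linear combination of the $m_i$ cannot lie in any proper differential specialization, i.e.\ $f_{\bar u}$ is as generic as a single equation on $V$ can be; and nonemptiness is governed by whether a generic point $\bar\eta$ of $V$ over $K$ admits a solution of $u_0=-\sum_i u_i m_i(\bar\eta)$, which fails precisely when $\dim V=0$.

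For the Kolchin polynomial I would compute on the generic point. Let $\bar\eta$ be generic in $V$ over $K$ and let $\bar a$ be a generic point of the hypersurface $\{u_0+\sum_i u_i m_i(\bar\eta)=0\}$ over $K\langle\bar\eta\rangle$; then $\mathrm{tp}(\bar\eta/K\langle\bar a\rangle)$ is the generic type of $V\cap H$, so its Kolchin polynomial is $\omega_{V\cap H/K\langle\bar a\rangle}$. Imposing the single order-$h$ relation $f_{\bar a}(\bar\eta)=0$ together with all of its $\Delta$-derivatives makes the value at $\bar\eta$ of the leading monomial of $f_{\bar a}$, and of each of its derivatives obtained by an operator of total order $\le t-h$, algebraically dependent on the data counted by $\omega_{V/K}(t)$; the number of such operators is $\binom{(t-h)+m}{m}=\binom{t+m-h}{m}$, and since $\bar a$ is otherwise independent from $\bar\eta$ one checks that the drop is exactly this and no more. (Equivalently: augment a characteristic set of $\mf I$ by $f_{\bar u}$ in a ranking that makes a fresh order-$h$ monomial its leader, and read off the Hilbert--Kolchin count.) In particular the leading term of the Kolchin polynomial drops by one in differential dimension, which is the ``codimension one'' assertion, and it yields the asserted nonemptiness criterion.

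The substantive point, where I expect the real work, is \emph{geometric} irreducibility: one must know that $I(V\cap H/K')$ remains prime for every $\Delta$-extension $K'$ of the field of definition, not merely over $K\langle\bar u\rangle$ --- the latter fails outright for algebraic curves, as Example~\ref{anombuium} shows. Here I would argue model-theoretically. Realize $\bar a$ inside a saturated model $\mc U\models \mathrm{DCF}_{0,m}$ and let $q=\mathrm{tp}(\bar\eta/K\langle\bar a\rangle)$ as above; geometric irreducibility of $V\cap H$ amounts to $q$ being stationary, i.e.\ to $K\langle\bar a\rangle$ being ``algebraically closed enough'' relative to $q$ so that $q$ has a unique nonforking extension to $\mc U$. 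Using a differential lying-over theorem (established earlier in the paper) to identify $I(V\cap H/K\langle\bar a\rangle)\cdot\mc U\{\bar y\}$ with $I(V\cap H/\mc U)$, a geometric reducibility would yield $\ge 2$ conjugate components, each of Kolchin polynomial strictly below $\omega_{V\cap H}$; the contradiction comes from a transitivity (monodromy) argument on the incidence correspondence $\{(\bar\eta,\bar a):f_{\bar a}(\bar\eta)=0\}$ over $V$, whose fibres over $V$ are irreducible and large, so that the structure group acts transitively on any set of such components. This transitivity is exactly what forces a hypothesis on the size of $V$: if the order $h$ of $H$ is at least one it goes through for every irreducible $V$ (indeed then one even gets ordinary, not merely geometric, irreducibility), while if $h=0$ it requires that $V$ not be an algebraic curve, as the examples of Ritt and Buium make plain. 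Finally, once $V\cap H$ is known to be geometrically irreducible, its Kolchin polynomial does not change under extension of the base field, so the computation of the second paragraph, carried out over $K\langle\bar a\rangle$, gives the asserted formula.
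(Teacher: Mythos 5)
Your first two paragraphs essentially track what the paper does over $K\langle \bar a\rangle$: primality of $[\mf I,f_{\bar u}]$ and the dimension drop are obtained by adapting \cite{GDIT} (Lemmas \ref{3.5}, \ref{generichyp2}, \ref{3.6}), and your Kolchin polynomial count is plausible, although the paper argues differently, via prolongation sequences in the sense of Moosa--Scanlon (Lemma \ref{3.6.1}); your ``one checks that the drop is exactly this and no more'' is exactly the nontrivial content there (in the paper, that the truncated sequence $V_l\cap\bigcap_{\theta\in\Theta(l-h)}Z(\theta f)$ is again a prolongation sequence), so that step is a sketch rather than a proof, but the route is reasonable.

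The genuine gap is in your third paragraph, which is the heart of the theorem. First, your premise that geometric reducibility would produce components ``each of Kolchin polynomial strictly below $\omega_{V\cap H}$'' is false: by Kolchin's result (Proposition \ref{lyingover}), every component of the base extension of a prime differential ideal has the \emph{same} Kolchin polynomial as the original; this equality is what the paper exploits at the end of its argument, not a contradiction to be derived. Second, the appeal to ``transitivity of the structure group on the set of components'' is both unspecified and insufficient even if granted: the components over $K\langle\bar a\rangle^{alg}$ are automatically permuted transitively by the Galois group of $K\langle\bar a\rangle^{alg}/K\langle\bar a\rangle$, and such conjugacy never by itself rules out having several components --- that is precisely the situation of a plane curve meeting a generic order-zero hypersurface, so some quantitative input must separate the cases, and your proposal supplies none. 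What the paper actually does is form the two-point incidence variety $W=\{(v_1,v_2,\beta)\,:\,v_i\in V\cap H_\beta\}$, compute via Sit's lemma that a pair of independent generics of $V\cap H_\beta$ together with $\beta$ has Kolchin polynomial $2\omega_{V/K}(t)-2\binom{t+m-h}{m}+n_1\binom{t+m}{m}$, and then show that any $(v_1,v_2,\beta)$ attaining this value must have $v_1\neq v_2$ independent generics of $V$; the diagonal case $v_1=v_2$ is excluded exactly by the hypothesis $\omega_V(t)>\binom{t+m}{m}$ (weakened to $\geq$ when $h\geq 1$), which is where ``$V$ is not an algebraic curve'' genuinely enters the computation. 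Uniqueness of this maximal-rank type on $W$, combined with the equality of Kolchin polynomials of components from Proposition \ref{lyingover}, forces $V\cap H_\beta$ to have a single component over $K\langle\beta\rangle^{alg}$. Your proposal contains no substitute for this counting argument: the role of the hypothesis on $V$ appears only as an assertion (``forces a hypothesis on the size of $V$''), and two generic points of the fiber --- the mechanism that actually distinguishes curves from higher-dimensional $V$ --- never enter your argument.
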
 

Our analysis draws inspiration from \cite{GDIT}, but we also use arguments of a more geometric and model theoretic nature. In particular, we employ the theory of prolongation spaces, in the sense of Moosa and Scanlon \cite{MSJETS} and arguments which use several properties of Lascar rank. 

Following the proof of our main theorem, we give a result regarding intersections of differential algebraic varieties with generic differential hypersurfaces passing through a given point. Specifically, we prove that if $V$ is a $d$ dimensional differential algebraic variety and $H_1, H_2, \ldots H_{d+1}$ are generic differential hypersurfaces which contain $\bar a$, then $V \cap \bigcap_{i=1}^{d+1} H_i \neq \emptyset$ then $\bar a \in V$. 
 
The special case in which the $H_i$ are hyperplanes in the case of one derivation was proved in \cite{GDIT}; our proof of the generalization is much shorter, owing to using stability theoretic tools (e.g. Lascar's symmetry lemma). This also generalizes \cite[Theorem 1.7]{Pongembedding}. 

One expects the main theorem of paper to be useful for proofs by induction on the differential transcendence degree of a differential algebraic variety. For instance, in \cite{OmarWilliamJim}, the theorem is used to study completeness in the Kolchin topology. 


\subsection*{Acknowledgements}
I would like to gratefully acknowledge the patient and thorough explanations of Ritt's example by William Sit and Phyllis Cassidy. This example, in part, triggered my interest in the problems considered here. Thanks to Phyllis Cassidy for alerting me to the work of Xiao-Shan Gao, Wei Li, and Chun-Ming Yuan. Their techniques obviously had a major influence. 

This work constitutes a portion of my thesis, supervised by Dave Marker. Thanks to Dave for many useful conversations on the topic. Thanks also to Rahim Moosa for several useful conversations on the problems considered in this paper, several of which took place during a visit to Waterloo sponsored by an American Mathematical Society Mathematics Research Community Grant. 

\section{Setting and definitions} \label{section2}
We will very briefly review \emph{some} of the developments from model theory and differential algebra necessary for our results; more complete expositions can be found in various sources, which we cite below. We use standard model-theoretic notation, following \cite{Marker} and differential algebraic notation, following \cite{KolchinDAAG}. 
In this paper, we will take $K$ to be a differential field of characteristic zero in $m$ commuting derivations, $\Delta = \{ \delta_1 , \ldots , \delta_m \}$. In this setting, we have a \emph{model companion}, the theory of $\Delta$-closed fields, denoted $DCF_{0,m}$. One may work entirely within a saturated model $\mc U \models DCF_{0,m}$ for this paper, taking all differential field extensions therein. However, the results of this paper require care with respect to the field we work over. We do not consider abstract differential algebraic varieties or differential schemes; we only consider affine differential algebraic varieties over a differential field. One can easily extend many of the results of this paper to the projective case, but we do not address this directly. 

The \emph{type} (in the sense of model theory) of a finite tuple of $ \mc U$ over $k$ is the collection of all first order formulae with parameters from $k$ which hold of the tuple; for a given tuple $\bar a,$ we write $tp (\bar a /k)$. A realization of a type $p$ over $k$ (we write $p \in S(k)$) is simply a tuple from a field extension satisfying all of the first order formulae in the type. As $DCF_{0,m}$ has \emph{quantifier elimination}, we have a correspondence between types and prime differential ideals and differential algebraic varieties. Given a type $p \in S(k),$ we have a corresponding (prime) differential ideal via
$$p \mapsto I_p = \{ f \in k\{y \} \, | \, f(y)=0 \in p \}.$$

The corresponding variety is simply the zero set of $I_p.$ For the reader not acquainted with the language of model theory, the type of a tuple $b$ over $k$ corresponds to the isomorphism type of the differential field extension $k \langle b \rangle /k$ with specified generators. 

When $R$ is a $\Delta$-ring and $S \subseteq R$, by $[S]$ we mean the $\Delta$-ideal generated by $S$ in $R$. When $f$ is an element in a differential polynomial ring, by $V(f)$, we mean the zero set of the differential polynomial. When we think of $f$ as a polynomial in some particular ring, we will write $Z(f)$ for the zero set. Similar notation applies to ideals or sets of elements in (differential) rings. 

Let $\Theta$ be the free commutative monoid generated by $\Delta.$ For $\theta \in \Theta,$ if $\theta=\delta_1^{\alpha_1} \ldots \delta_m^{\alpha_m},$ then $ord(\theta)=\alpha_1+ \ldots + \ldots + \alpha_m.$ The order gives a grading on the monoid $\Theta$. We let $$\Theta (s)=\{ \theta \in \Theta: \, ord(\theta) \leq s \}.$$ Let $k$ be an arbitrary differential field. 

\begin{thm}\label{combinatorialprep} (Theorem 6, page 115, \cite{KolchinDAAG})
Let $\eta = (\eta_1 , \ldots , \eta_n ) $ be a finite family of elements in some extension of $k.$ There is a numerical polynomial $\omega_{\eta /k} (t)$ with the following properties. 
\begin{enumerate} 
\item For sufficiently large $t \in \m N,$ the transcendence degree of $k ((\theta \eta_j)_{\theta \in \Theta(t), \, 1 \leq j \leq n })$ over $k$ is equal to $\omega_{\eta /k} (t).$ 
\item $deg (\omega_{\eta /k }(t)) \leq m$  
\item One can write $$\omega_{\eta / k }(t) = \sum_{o \leq i \leq m} a_i \binom{t+i}{i}$$ In this case, $a_m$ is the \emph{differential transcendence degree} of $k \langle \eta \rangle$ over $k.$ 
\end{enumerate}
\end{thm}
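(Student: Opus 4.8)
The plan is to reduce to the language of prime differential ideals and characteristic sets. Replacing $\eta$ by a generic realization, let $\mf{p} = I(\eta/k) \subseteq k\{y_1,\dots,y_n\}$ be the associated prime differential ideal, fix an \emph{orderly} ranking on the derivatives $\theta y_j$ (so that $\operatorname{ord}(\theta_1) < \operatorname{ord}(\theta_2)$ implies $\theta_1 y_i$ has lower rank than $\theta_2 y_j$), and let $\Lambda$ be a characteristic set of $\mf{p}$ for this ranking. For each $j$, record the leaders of the elements of $\Lambda$ whose leading variable is a derivative of $y_j$; writing each such leader as $\delta_1^{\alpha_1}\cdots\delta_m^{\alpha_m} y_j$, this produces a finite set $E_j \subseteq \m{N}^m$ of exponent vectors. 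Call $\theta y_j$ a \emph{leader-derivative} if the exponent vector of $\theta$ dominates coordinatewise some element of $E_j$.

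The key step is to show that for every $t$, the family $\{\theta\eta_j : \operatorname{ord}(\theta)\le t,\ \theta y_j\text{ not a leader-derivative}\}$ is a transcendence basis of $k\big((\theta\eta_j)_{\theta\in\Theta(t),\,1\le j\le n}\big)$ over $k$. Algebraic independence uses the defining property of a characteristic set: a polynomial in the non-leader-derivatives alone is (partially) reduced with respect to $\Lambda$, so if it lies in $\mf{p}$ it is zero; hence the listed elements are algebraically independent over $k$. For spanning, each $A \in \Lambda$ with leader $u_A = \theta_A y_j$ has separant $S_A$ not in $\mf{p}$ (a standard property of characteristic sets of prime differential ideals), hence $S_A(\eta)\ne 0$; since $\theta A$ is linear in $\theta u_A$ with coefficient $S_A$ plus a polynomial in lower-ranked derivatives, and since the orderly ranking guarantees those lower-ranked derivatives have order $\le t$ whenever $\theta u_A$ does, an induction on rank expresses every leader-derivative of order $\le t$, evaluated at $\eta$, algebraically over $k$ adjoined to the non-leader-derivatives of order $\le t$.

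Consequently $\omega_{\eta/k}(t) = \sum_{j=1}^n N_{E_j}(t)$ for all $t$, where $N_E(t) = \#\{v\in\m{N}^m : |v|\le t,\ v\not\ge e\text{ for all }e\in E\}$. I would then invoke the basic combinatorial lemma — proved by induction on $m$ and on $|E|$ using inclusion–exclusion over $\Theta(t)$ — that for any finite $E\subseteq\m{N}^m$, $N_E(t)$ agrees for large $t$ with a numerical polynomial of degree $\le m$ whose coefficient of $\binom{t+m}{m}$ is $1$ if $E=\emptyset$ and $0$ otherwise (note $N_\emptyset(t) = |\Theta(t)| = \binom{t+m}{m}$). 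Summing over $j$ yields a numerical polynomial $\omega_{\eta/k}(t) = \sum_{0\le i\le m} a_i\binom{t+i}{i}$ of degree $\le m$, proving (1) and (2), with $a_m$ equal to the number of indices $j$ for which no derivative of $y_j$ is a leader of $\Lambda$. Finally, for (3), these same indices $j$ index a differential transcendence basis of $k\langle\eta\rangle/k$: the corresponding $\eta_j$ are differentially independent since $\mf{p}$ contains no nonzero element involving only those variables, and every remaining $\eta_\ell$ is differentially algebraic over $k$ together with them by inducting on the rank of the lowest-ranked element of $\Lambda$ whose leader is a derivative of $y_\ell$. Hence $a_m$ is the differential transcendence degree.

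\textbf{Main obstacle.} The delicate part is the transcendence-basis claim in the second paragraph — carefully marshalling pseudo-reduction and the non-vanishing of separants to get both the algebraic independence of the non-leader-derivatives and the algebraicity of the leader-derivatives over them, while tracking orders under the orderly ranking. The combinatorial lemma and the differential-transcendence-degree identification are then essentially book-keeping.
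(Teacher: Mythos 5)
Your outline is correct and is essentially Kolchin's own argument (the paper does not prove this theorem; it only cites Theorem 6 on page 115 of \emph{Differential Algebra and Algebraic Groups}, whose proof proceeds exactly as you describe: a characteristic set with respect to an orderly ranking, the non-leader derivatives giving a transcendence basis at each level $t$, and the numerical-polynomial counting lemma for subsets of $\m N^m$). The only detail worth flagging is that in the spanning step the case $\theta=1$ uses the non-vanishing of the \emph{initial} of $A$ at $\eta$ rather than the separant, but this is handled identically since the initial also lies outside $\mf p$.
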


\begin{defn} When $V$ is a differential algebraic variety over $K$ and $\bar b \in V$ is a generic point over $K$, we define $dim(V) $ to be the $\Delta$-transcendence degree of $K \langle \bar b \rangle$ over $K$.
\end{defn} 

\begin{defn} 
The polynomial from the theorem is called the \emph{Kolchin polynomial} or the \emph{differential dimension polynomial}. Let $p \in S(k).$ Then $\omega_p(t):=\omega_{b/k}(t)$ where $b$ is any realization of the type $p$ over $k$. 
\end{defn}

\begin{defn}
Suppose that $p$ and $q$ are types such that $q$ extends $p$. This means $p \in S(k)$ for some differential field $k$ and $q \in S(K)$ where $K$ is a differential field extension of $k$; further, as a sets of first order formulae $p \subset q.$ 
\end{defn}
In this case, any realization of $q$ is necessarily a realization of $p$ when we consider only the formulae which are over $k$. 

\begin{defn}
Let $q$ extend $p$. We say $q$ is a \emph{nonforking extension} of $p$ if $$w_p(t) =\omega_q(t).$$ Note that the Kolchin polynomial on the left is being calculated over $k$ and the Kolchin polynomial on the right is being calculated over $K$. 
\end{defn}

\begin{defn} Let $p$ be a type. Then, \begin{itemize} 
\item $RU(p) \geq 0$ just in case $p$ is consistent. 
\item $RU(p) \geq \beta$, where $\beta$ is a limit just in case $RU(p) \geq \alpha$ for all $\alpha < \beta.$ 
\item $RU(p) \geq \alpha+1$ just in case there is a forking extension $q$ of $p$ such that $RU(q) \geq \alpha.$  
\end{itemize}
\end{defn}

\begin{rem} The last three definitions are specific instances of model theoretic notation in the setting of differential algebra; for the more general definitions, see \cite{GST}. Our development of this is rather nonstandard; normally, forking is defined in a much more general manner. That forking specializes to the above notion in differential algebra requires proof, but is a natural consequence of the basic model theory of differential fields, see \cite{McGrail}. For the differential algebraist, a forking extension of $\bar a$ can be regarded as specialization which occurs after base change to a larger ring for which the locus of $\bar a$ over the larger ring does not descend to the smaller ring. In other words, Lascar rank gives information regarding the structure of differential specializations of a tuple after base change. Specific instances of calculations of Lascar rank in this setting can be found in various sources \cite{MneqU,Jindecomposability,Freitag2014350,sanchez2013contributions,freitag2012model}. 
\end{rem}

When considering model theoretic ranks on types like Lascar rank (denoted $RU(p)$) we will write $RU(V)$ for a definable set (whose $\Delta$-closure is irreducible) for $RU(p)$ where $p$ is a type of maximal Lascar rank in $V$. We should note that some care is required, since the model theoretic ranks are not always invariant under taking $\Delta$-closure. See \cite{FGenerics,freitag2012model} for an example. 

We will be using Lascar rank at various points, and remind the reader of the following result, which we use throughout the paper: 
\begin{prop} \cite{McGrail} \label{MGFACT} Let $b$ be a tuple in a differential field extension of $k.$ Then $$dim(b/k)=n \text{ if and only if } \omega^m \cdot n \leq RU(tp(b/k)) < \omega^m \cdot (n+1)$$
\end{prop}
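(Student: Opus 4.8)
The plan is to establish the two estimates $RU(tp(b/k))\ge\omega^m\cdot n$ and $RU(tp(b/k))<\omega^m\cdot(n+1)$ under the hypothesis $dim(b/k)=n$; the equivalence then follows formally, since for any tuple with $\Delta$-transcendence degree $n'$ these bounds place $RU$ in the half-open interval $[\omega^m n',\,\omega^m(n'+1))$, and distinct values of $n'$ give disjoint intervals. The ingredients I would use are: Lascar's inequality $RU(a/k\langle c\rangle)+RU(c/k)\le RU(ac/k)$; the invariance of $U$-rank under nonforking extensions; the characterization (recorded before the statement) that an extension is nonforking exactly when the Kolchin polynomial is unchanged, together with the elementary fact that $\omega_{b/K}(t)\le\omega_{b/k}(t)$ for large $t$ whenever $K\supseteq k$ (enlarging the base field cannot raise a transcendence degree); and additivity of $\Delta$-transcendence degree in towers. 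That $DCF_{0,m}$ is superstable, so that $RU$ is ordinal-valued, is used throughout.

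For the upper bound, assign to a type $p$ with $\omega_p(t)=\sum_{i=0}^m a_i\binom{t+i}{i}$ the ordinal $o(p):=\sum_{i=0}^m a_i\,\omega^i$, which is already in Cantor normal form. By Theorem~\ref{combinatorialprep}(3), $a_m=dim(b/k)=n$, so $o(p)=\omega^m n+\sum_{i<m}a_i\omega^i<\omega^m(n+1)$; it therefore suffices to show $RU(p)\le o(p)$ for every $p$, and I would do this by transfinite induction on $o(p)$. If $q$ is a forking extension of $p$, its Kolchin polynomial is eventually strictly smaller than $\omega_p$; since comparison of these numerical polynomials coincides with lexicographic comparison of the associated ordinals, $o(q)<o(p)$, hence $RU(q)\le o(q)<o(p)$ by induction and so $RU(q)+1\le o(p)$. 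Taking the supremum over forking extensions gives $RU(p)\le o(p)$.

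For the lower bound, first reduce to tuples of $\Delta$-independent transcendentals: choose a $\Delta$-transcendence basis $b'\subseteq b$ of $b$ over $k$, of length $n$; adjoining the remaining coordinates of $b$ cannot decrease $U$-rank (Lascar's inequality), so $RU(tp(b/k))\ge RU(tp(b'/k))$. Writing $b'=(b_1',\dots,b_n')$, induct on $n$: since the $b_i'$ are $\Delta$-independent over $k$, the type $tp(b_2'\cdots b_n'/k\langle b_1'\rangle)$ has the same Kolchin polynomial as $tp(b_2'\cdots b_n'/k)$, hence is nonforking, so $RU(b_2'\cdots b_n'/k\langle b_1'\rangle)=RU(b_2'\cdots b_n'/k)\ge\omega^m(n-1)$ by the inductive hypothesis; Lascar's inequality then gives $RU(b'/k)\ge RU(b_1'/k)+\omega^m(n-1)$. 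Everything reduces to the single assertion that the generic type of $\mathbb{A}^1$ over $k$ — the type of one $\Delta$-transcendental — has $U$-rank at least $\omega^m$.

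This last point is the \emph{main obstacle}, and I would prove it by induction on the number $m$ of derivations; it suffices to produce, for each $\beta<\omega^m$, a forking extension of the generic type of $\mathbb{A}^1$ of $U$-rank $\ge\beta$. Fix $j$ with $\beta<\omega^{m-1}\cdot j$, let $c$ be a fresh $\Delta$-transcendental, and take the type over $k\langle c\rangle$ of a generic solution $b_0$ of $\delta_1^{\,j}y=c$ (this solution set is a coset of the irreducible $\Delta$-group $Z(\delta_1^{\,j}y)$, so has a well-defined generic type). Since $c=\delta_1^{\,j}b_0\in k\langle b_0\rangle$, additivity of $\Delta$-transcendence degree forces $dim(b_0/k)=1$, so this is genuinely an extension of the generic type of $\mathbb{A}^1$ over $k$; its Kolchin polynomial is, for large $t$, a polynomial of degree $m-1$ with leading coefficient $j$, hence strictly below $\binom{t+m}{m}$, so the extension forks. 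By translation, $RU(b_0/k\langle c\rangle)=RU\big(Z(\delta_1^{\,j}y)\big)$, and it remains to see $RU\big(Z(\delta_1^{\,j}y)\big)\ge\omega^{m-1}\cdot j$: the set $Z(\delta_1 y)$ of $\delta_1$-constants, with $\delta_2,\dots,\delta_m$, is a model of $DCF_{0,m-1}$, so its generic type has $U$-rank $\ge\omega^{m-1}$ by the inductive hypothesis on $m$ (the base case $m=1$ being the classical fact that generic solutions of $\delta^j y=c$ in $DCF_0$ witness $U$-rank $\omega$ for the differential line), and one passes from $j-1$ to $j$ by the same device — peeling off one application of $\delta_1$ over a base enlarged by a generic $\delta_1$-constant — to build forking chains realizing $\omega^{m-1}\cdot j$. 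Combining the two bounds gives $\omega^m n\le RU(tp(b/k))<\omega^m(n+1)$ whenever $dim(b/k)=n$, which is the claim.
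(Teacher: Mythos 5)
The paper offers no proof of this proposition (it is quoted from McGrail), so your argument has to stand on its own; its lower-bound half is fine in outline, but the upper-bound half has a genuine gap. The ordinal $o(p)=\sum_{i=0}^m a_i\omega^i$ is not well defined: Theorem \ref{combinatorialprep} only says the $a_i$ are integers, and for $m\geq 2$ the lower coefficients of a Kolchin polynomial in the basis $\binom{t+i}{i}$ really can be negative. A concrete example is the ideal $[\delta_1\delta_2 y]$ in one indeterminate with $m=2$ (equivalently, the paper's own intersection formula $\binom{t+m}{m}-\binom{t+m-h}{m}$ with $m=h=2$, $n=1$): here $\omega(t)=2t+1=2\binom{t+1}{1}-\binom{t}{0}$, so $a_0=-1$. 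Hence ``$o(p)$ is already in Cantor normal form'' fails, and with it your transfinite induction as set up: lexicographic order on integer coefficient vectors is not a well-order, so ``forking strictly lowers the Kolchin polynomial'' does not by itself produce an ordinal bound. (In the ordinary case $m=1$ one has $a_0=\omega_p(0)-a_1\geq 0$, which is why the assignment looks harmless.) To repair it you need a genuine extra input: an ordinal-valued, order-preserving invariant of Kolchin polynomials that takes values below $\omega^m\cdot(n+1)$ whenever $a_m=n$ — for instance the order type of the set of predecessors under eventual domination, which requires Sit's theorem that Kolchin polynomials are well-ordered by eventual domination together with an order-type estimate for the initial segment of polynomials with leading coefficient at most $n$. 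This is essentially how the cited source handles the upper bound, and it is not a cosmetic fix.

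Two smaller points on the lower bound. First, when you quote ``$Z(\delta_1 y)$ with $\delta_2,\dots,\delta_m$ is a model of $DCF_{0,m-1}$,'' you still need that Lascar rank computed inside that structure agrees with Lascar rank computed in $\mathcal U\models DCF_{0,m}$; this uses that the $\delta_1$-constants are stably embedded and that the induced structure is the pure $DCF_{0,m-1}$ structure, so it should be stated, not elided. Second, Lascar's inequality is $RU(a/k\langle c\rangle)+RU(c/k)\leq RU(ac/k)$, so in your induction the bound should read $\omega^m\cdot(n-1)+RU(b_1'/k)$ rather than $RU(b_1'/k)+\omega^m\cdot(n-1)$; the conclusion $RU(b'/k)\geq\omega^m\cdot n$ is unaffected, but ordinal addition is not commutative and the summands must appear in the right order.
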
 

We will also require a differential notion of specializations:

\begin{defn} Let $\Delta = \{ \delta_1 , \ldots , \delta_m \}$. Let $\Delta ' = \{ \delta_1 ' , \ldots , \delta_m ' \} $. A homomorphism $\phi$ from $\Delta$-ring $(R, \Delta)$ to $\Delta '$-ring $(S, \Delta ')$ is called a differential homomorphism if for each $i,$  $\phi \circ \delta_i = \delta_i ' \circ \phi.$ When $R$ is an integral domain and $S$ is a field, then such a map is called a \emph{$\Delta$-specialization.}
\end{defn}


The following proposition is proved in a constructive manner in \cite[Theorem 2.16]{GDIT}; and an analogous proof works in the partial case. 

\begin{prop}\label{special} Let $\bar u =(u_1, \ldots , u_r ) \subset \mc U$ be a set of $\Delta$-K independent differential transcendental elements. Let $\bar y =(y_1 , \ldots , y_n)$ be a set of differential indeterminants. Let $P_i(\bar u , \bar y) \in K \{ \bar u , \bar y \}$ for $i=1, \ldots , n_1.$  Suppose $\phi: K \{\bar y \} \rightarrow \mc U$ be a differential specialization into $\mc U$ such that $\bar u$ is a set of $\Delta$-transcendentals over $K \langle \phi (\bar y ) \rangle.$ 
Suppose that $P_i (\bar u , \phi (\bar y))$ are (as a collection), $\Delta$-dependent over $K \langle \bar u \rangle.$ Then let $ \psi$ be a differential specialization from $K\langle \bar u \rangle \rightarrow K.$ The collection $\{ P_i(\psi (\bar u ), \phi (\bar y )) \}_{i=1, \ldots , n_1}$ are $\Delta$-dependent over $K.$ 
\end{prop}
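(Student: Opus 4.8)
The plan is to reduce the statement to a purely algebraic specialization principle, applied not to the $P_i$ themselves but to a suitable prolongation (jet) of the system. First I would make precise the meaning of "$\Delta$-dependent over $F$": a finite collection $\{Q_j\}$ in some $\Delta$-field extension of $F$ is $\Delta$-dependent over $F$ exactly when, for every $s \in \mathbb{N}$, the transcendence degree over $F$ of the field generated by $\{\theta Q_j : \theta \in \Theta(s)\}$ fails to grow linearly at the maximal possible rate — equivalently, for some $s$ there is a nontrivial polynomial relation over $F$ among the $\theta Q_j$, $\theta \in \Theta(s)$. So the hypothesis gives: there is an integer $s$ and a nonzero polynomial $G$ over $K\langle \bar u\rangle$ (which, after clearing denominators, we may take with coefficients in $K\{\bar u\}$) such that $G\big((\theta P_i(\bar u, \phi(\bar y)))_{\theta \in \Theta(s), i}\big)=0$. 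The goal is to produce the analogous relation over $K$ for the specialized tuple $\psi(\bar u)$, for every $s$.

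Next I would invoke the constructive content of \cite[Theorem 2.16]{GDIT} exactly as the statement suggests: the point is that the relation witnessing $\Delta$-dependence can be chosen in a form whose coefficients specialize well. Concretely, viewing $\bar u$ as $\Delta$-transcendental over $K\langle \phi(\bar y)\rangle$, the elements $\theta P_i(\bar u, \phi(\bar y))$ lie in $K\langle \phi(\bar y)\rangle\langle \bar u\rangle$, and by the algebraic dependence just extracted we can, as in the ordinary case, clear the relation so that it becomes an identity in the polynomial ring $K\langle \phi(\bar y)\rangle\{\bar u\}$ — i.e. a relation that holds identically in $\bar u$ before specialization. Then applying the $\Delta$-specialization $\psi \colon K\langle \bar u\rangle \to K$ (extended appropriately, using that $\phi(\bar y)$ is fixed and $\bar u$ maps into $K$) to the coefficients of this identity yields a candidate relation over $K$ among the $\theta P_i(\psi(\bar u), \phi(\bar y))$. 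The remaining worry is that $\psi$ might annihilate the relation entirely; this is handled, just as in \cite{GDIT}, by choosing the witnessing relation to be \emph{primitive}, so that some coefficient is a unit (or can be normalized to $1$) and hence survives $\psi$. Carrying this out for each $s$ gives $\Delta$-dependence of $\{P_i(\psi(\bar u),\phi(\bar y))\}$ over $K$.

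The main obstacle I anticipate is precisely the last point: ensuring that the algebraic dependence relations are chosen uniformly and in a "specialization-stable" normal form across all orders $s$ simultaneously, rather than one $s$ at a time with relations that degenerate under $\psi$. This is where the constructive proof of \cite[Theorem 2.16]{GDIT} does the real work — it exhibits the dependence via an explicit resultant-type or elimination construction whose output is visibly a polynomial identity with a controllable, non-vanishing leading coefficient — and the task in the partial ($|\Delta|=m$) case is to check that replacing $\Theta(s)$ for a single derivation by the monoid $\Theta(s)$ on $m$ commuting derivations changes nothing essential: the combinatorics of $\Theta(s)$ enters only through the counting of derivatives (governed by Theorem \ref{combinatorialprep}), while the elimination-theoretic core is formally identical. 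I would therefore present the partial-case proof as a direct transcription of the ordinary argument, flagging only the replacement of the order filtration and noting that $\bar u$ remaining $\Delta$-transcendental over $K\langle\phi(\bar y)\rangle$ is what licenses treating the $\theta u_k$ as genuine indeterminates throughout.
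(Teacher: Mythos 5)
Your overall strategy matches the paper's, which in fact offers no argument at all beyond the remark that the proposition is proved constructively in \cite[Theorem 2.16]{GDIT} and that the analogous proof works in the partial case: like the paper, you reduce $\Delta$-dependence to an algebraic relation at some finite order, use the hypothesis that $\bar u$ is $\Delta$-transcendental over $K\langle \phi(\bar y)\rangle$ to treat the derivatives $\theta u_k$ as genuine indeterminates (so the witnessing relation holds identically in $\bar u$ and can be hit with $\psi$), and observe that passing from one derivation to $m$ commuting derivations only changes the combinatorics of $\Theta(s)$. That part is fine, as is your closing remark about what licenses the whole manoeuvre. (Minor slip: $\Delta$-dependence is ``for \emph{some} $s$ there is a relation among the $\theta P_i$, $\theta\in\Theta(s)$,'' not ``for every $s$ the transcendence degree falls short''; you use the correct version anyway.)

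The genuine gap is the one step you supply on your own: the claim that the possible annihilation of the relation by $\psi$ is handled ``by choosing the witnessing relation to be primitive, so that some coefficient is a unit.'' That is exactly where the entire content of the proposition lives, and the proposed fix does not work. Primitivity of the coefficients in $K\{\bar u\}$ does not prevent all of them from lying in $\ker\psi$: the kernel of a differential specialization $K\{\bar u\}\to K$ is the non-principal differential ideal $[\,u_i-\psi(u_i)\,]$, and for instance the coprime pair $u,\ \delta u$ is killed entirely by $u\mapsto 0$. Nor can you normalize a coefficient to $1$: dividing inside the field $K\langle\bar u\rangle$ and then clearing denominators merely reintroduces a common denominator $D\in K\{\bar u\}$ which may itself vanish under $\psi$. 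The argument that actually closes this hole is the classical specialization lemma applied to the prolongation: the elements $\theta P_i(\bar u,\phi(\bar y))$, $\theta\in\Theta(s)$, are polynomials in finitely many $\theta u_j$ (algebraically independent over the field generated by $K$ and the relevant derivatives of $\phi(\bar y)$) which are algebraically dependent over $K$ adjoined those finitely many $\theta u_j$; one then specializes the $\theta u_j$ to $\psi(\theta u_j)\in K$ one variable at a time, choosing at each stage a nonzero relation of minimal degree in the variable being specialized, so that if every coefficient vanished at the specialized value one could factor out $\theta u_j-\psi(\theta u_j)$, cancel it in the ambient domain, and contradict minimality. This (or a direct transcription of the elimination argument in \cite[Theorem 2.16]{GDIT}) must replace the primitivity remark; with that substitution the rest of your reduction goes through in the partial case exactly as you describe.
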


\section{Intersections} \label{section3}
In this section we develop an intersection theory for differential algebraic varieties with generic $\Delta$-polynomials. The influence of \cite{GDIT} for proving statements about irreducibility over specific differential fields is obvious; we have adapted their techniques to the partial differential setting. Our arguments about dimensions of intersections were done earlier from a more model-theoretic point of view. 

\begin{defn}
In $\m A^n,$ the differential hypersurfaces are the zeros of a $\Delta$-polynomial of the form 
$$a_0+\sum a_i m_i$$
where $m_i$ are differential monomials in $\mc F \{y_1, \ldots ,y_n \}.$ For convenience, in the following discussion, we do not consider $1$ to be a monomial. 
A \emph{generic $\Delta$-polynomial} of order $s$ and degree $r$ over $K$ is a $\Delta$-polynomial 
$$f=a_0+\sum a_i m_i$$
where $m_i$ ranges over all differential monomials of order less than or equal to $s$ and degree less than or equal to $r$ (i.e. monomials of $\Theta (s) (\bar y)$ of degree at most $r$) and $ (a_0, a_1, \ldots , a_n)$ is a tuple of independent $\Delta$-transcendentals over $K.$ A \emph{generic $\Delta$-hypersurface} of order $s$ and degree $r$ is the zero set of a generic $\Delta$-polynomial of order $s$ and degree $r.$ When $f$ is given as above, we let $ a_f$ be the tuple of coefficients of $f.$ Throughout, we adopt the notation $\bar a_f =a_f \backslash \{a_0\}.$ 
\end{defn} 


The next lemma is proved in the ordinary case in \cite[Lemma 3.5]{GDIT}. The proof in this case works similarly, assuming that one sets the stage with the proper reduction theory in the partial case. One might notice that Lemma 3.5 of \cite{GDIT} has a second portion. For now, we will concentrate only on the irreducibility of the intersection. Necessary and sufficient conditions for the intersection to be nonempty will be given later.

\begin{lem}\label{3.5}
Let $\mf I$ be a prime $\Delta$-ideal in $K \{ \bar y\}$ with differential transcendence degree $d$ and let $f=y_0 +\sum_{i} a_i m_i$ be a generic degree $d_1$ and order $h$ differential polynomial ($\bar a$ is a tuple of $\Delta$-transcendentals over $K$). Then $\mf I_0=[\mf I,f ]$ is a prime $\Delta$-ideal of $K \langle \bar a_f \rangle \{\bar y , y_0 \}.$ Further, $ \mf I _0 \cap K \langle \bar a_f \rangle \{y_0 \}  \neq 0 $ if and only if $V$ has dimension zero.  
\end{lem}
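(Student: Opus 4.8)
The plan is to follow the strategy of [GDIT, Lemma 3.5], adapting the reduction-theoretic arguments to the partial differential setting. First I would set up a ranking on $K\langle \bar a_f\rangle\{\bar y, y_0\}$ in which $y_0$ is ranked above all of $\bar y$, so that $f = y_0 + \sum_i a_i m_i$ is itself a characteristic-set-style element: its leader is $y_0$ (appearing linearly), its initial is $1$, and its separant is $1$. Then the key observation is that reduction of any $\Delta$-polynomial $g \in K\langle\bar a_f\rangle\{\bar y,y_0\}$ modulo $f$ (and its derivatives $\theta f$, which have leaders $\theta y_0$ with initial and separant $1$) terminates and produces a remainder in $K\langle\bar a_f\rangle\{\bar y\}$ that is congruent to $g$ modulo $[f]$. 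Concretely, $[\mf I, f] \cap K\langle\bar a_f\rangle\{\bar y\}$ should be shown to equal the ideal generated by $\mf I$ after extension of the base field from $K$ to $K\langle\bar a_f\rangle$ — here one needs that $\bar a_f$ remains a tuple of $\Delta$-transcendentals over the fraction field of $K\{\bar y\}/\mf I$, which holds because $\bar a_f$ is $\Delta$-transcendental over $K$ and $\mf I$ has finite differential transcendence degree, so $K\langle\bar a_f\rangle$ and $\text{Frac}(K\{\bar y\}/\mf I)$ are linearly disjoint appropriately; primality of the extended ideal then follows from the fact that $\mf I$ is prime and $K\langle\bar a_f\rangle$ is a purely transcendental (hence separable, and in char $0$ automatically so) extension of $K$.

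Next, to get primality of $\mf I_0 = [\mf I, f]$ itself: given a product $gh \in [\mf I,f]$, reduce $g$ and $h$ modulo $f$ to obtain $g_0, h_0 \in K\langle\bar a_f\rangle\{\bar y\}$ with $g \equiv g_0$, $h \equiv h_0 \pmod{[f]}$. Since the initials and separants involved are all $1$, no denominators are introduced, so $g_0 h_0 \equiv gh \equiv 0 \pmod{[\mf I,f]}$, whence $g_0 h_0 \in [\mf I,f]\cap K\langle\bar a_f\rangle\{\bar y\}$, which by the previous paragraph is the (prime) extension of $\mf I$; so one of $g_0, h_0$ lies in it, hence in $[\mf I,f]$, and therefore so does the corresponding one of $g,h$. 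This establishes that $\mf I_0$ is prime. I would also remark that $\mf I_0$ is proper: $1 \notin [\mf I,f]$ since its reduction is $1 \notin \mf I^{e}$.

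For the dimension count in the "Further" clause: the differential transcendence degree of $K\langle\bar a_f\rangle\{\bar y, y_0\}/\mf I_0$ is $d$ (the variety $V$ picks up dimension $d$ from $\bar y$ and loses the single free derivative direction $y_0$, since $y_0$ is now differentially algebraic over $K\langle\bar a_f, \bar y\rangle$ via $f$, but we gained nothing else). The claim $\mf I_0 \cap K\langle\bar a_f\rangle\{y_0\} \neq 0$ iff $\dim V = 0$ is then: if $\dim V > 0$, then $\bar y$ has a nontrivial $\Delta$-transcendental component, and since $f$ is generic, no nonzero $\Delta$-polynomial relation in $y_0$ alone can be forced — one argues that a generic point $(\bar b, b_0)$ of $\mf I_0$ has $b_0 = -\sum a_i m_i(\bar b)$ with the $a_i$ chosen $\Delta$-transcendental over $K\langle \bar b\rangle$, so $b_0$ is itself $\Delta$-transcendental over $K$ when some $m_i(\bar b)\neq 0$, which happens precisely when $\bar b$ is not all-constant, i.e. when $\dim V >0$; conversely if $\dim V = 0$, then $\bar b$ satisfies a characteristic set over $K$, $K\langle\bar b\rangle/K$ is $\Delta$-algebraic, and $b_0 = -\sum a_i m_i(\bar b)$ satisfies a nontrivial $\Delta$-polynomial over $K\langle\bar a_f\rangle$ — in fact one exhibits the "preparation" relation explicitly.

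The main obstacle I expect is the careful bookkeeping in the reduction step for \emph{partial} differential rings: in contrast to the ordinary case, $\Theta(h)(\bar y)$ is a more complex set and the derivatives $\theta f$ for $\theta \in \Theta$ have leaders $\theta y_0$ that must be correctly ordered relative to one another and to the $\theta y_i$; one must verify that the chosen ranking genuinely makes $\{\theta f\}$ behave like (the derivatives of) an element of a coherent autoreduced set so that Rosenfeld-style / characteristic-set reduction applies cleanly and the congruence $[\mf I, f]\cap K\langle\bar a_f\rangle\{\bar y\} = \mf I^{e}$ holds without spurious saturation by separants or initials. Because all separants and initials here equal $1$, I expect this to go through, but it is the place where the partial-case adaptation of [GDIT, Lemma 3.5] requires the most attention.
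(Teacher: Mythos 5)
Your treatment of the primality assertion is sound and is essentially the paper's argument in different packaging: the paper likewise fixes a ranking making $y_0$ the leader of $f$ (initial and separant $1$), reduces an arbitrary $g\in K\langle \bar a_f\rangle\{\bar y,y_0\}$ modulo $f$ to some $g_0\in K\langle \bar a_f\rangle\{\bar y\}$, and uses that the extension of $\mf I$ to $K\langle \bar a_f\rangle\{\bar y\}$ is prime --- though it packages this by choosing a generic point $\bar b$ of $V$ independent from $\bar a$ over $K$ and exhibiting $(\bar b,-\sum_i a_im_i(\bar b))$ as a generic point whose vanishing ideal is exactly $\mf I_0$, whereas you argue via the contraction identity $[\mf I,f]\cap K\langle \bar a_f\rangle\{\bar y\}=\mf I\cdot K\langle \bar a_f\rangle\{\bar y\}$ and a product argument. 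Both work; for the contraction identity the cleanest route is the substitution homomorphism $y_0\mapsto -\sum_i a_im_i(\bar y)$ rather than Rosenfeld-style machinery, and primality of the extended ideal comes from the fact that $K\langle \bar a_f\rangle$ is a purely (algebraically) transcendental field extension of $K$, or equivalently from choosing $\bar b$ independent from $\bar a_f$; your stated reason (``$\mf I$ has finite differential transcendence degree'') is not what does the work, since whether $\bar a_f$ is $\Delta$-transcendental over a copy of $\mathrm{Frac}(K\{\bar y\}/\mf I)$ depends on how that copy is embedded.

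The genuine gap is in the ``Further'' clause. What must be shown is: if $\dim V>0$ then $\mf I_0\cap K\langle \bar a_f\rangle\{y_0\}=0$, i.e.\ $b_0=-\sum_i a_im_i(\bar b)$ is $\Delta$-transcendental over $K\langle \bar a_f\rangle$, not merely over $K$. Your argument only yields transcendence over $K$, and your claimed equivalence ``some $m_i(\bar b)\neq 0$ iff $\dim V>0$'' is false: since the degree-one monomials $y_j$ occur among the $m_i$, some $m_i(\bar b)\neq 0$ as soon as $\bar b\neq 0$. For instance, if $V$ is the single nonzero $K$-rational point $(1,0,\ldots,0)$, then $b_0$ is $\Delta$-transcendental over $K$, yet $y_0-b_0\in\mf I_0\cap K\langle \bar a_f\rangle\{y_0\}$, exactly as the lemma requires for $\dim V=0$; so transcendence over $K$ cannot be the relevant invariant. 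The forward implication needs a genericity argument over the coefficient field: if $0\neq p\in\mf I_0\cap K\langle \bar a_f\rangle\{y_0\}$, clear denominators to get $p\in K\{\bar a_f,y_0\}$ with $p(\bar a_f,-\sum_i a_im_i(\bar b))=0$, then specialize the coefficient of the monomial $y_j$ (where $b_j$ is a $\Delta$-transcendental over $K$, which exists since $\dim V>0$) to $-1$ and all other coefficients to $0$; Proposition \ref{special} then forces $b_j$ to be $\Delta$-algebraically dependent over $K$, a contradiction. This is precisely the specialization argument the paper carries out in Lemma \ref{generichyp2}; in Lemma \ref{3.5} the paper compresses it into the statement that the projection of $V(\mf I_0)$ onto the $y_0$-coordinate is dominant exactly when $\dim V>0$, but some such argument must be supplied, and your proposal does not supply it. (Your converse direction, that $\dim V=0$ forces the intersection to be nonzero, is fine.)
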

\begin{proof} Let $\bar b = (b_1, \ldots , b_{n})$ be a generic point of $V(I)$ over $K$ such that $\bar b$ is independent from $\bar a$ over $K.$ In model theoretic terms, $\bar b \ind_K \bar a$. Let $f=y_0+\sum_{i=1}^n a_i m_i(\bar y)$. Consider the tuple $(b_1, \ldots, b_{n_1}, -\sum_i a_i m_i(\bar b)).$ 

Let $\mf I_0=[\mf I,f].$ We show irreducibility of the variety $V(\mf I_0)$ in $\m A^{n+1}$ via showing that it is the Kolchin closure of $(b_1, \ldots, b_{n}, -\sum_i a_i m_i( \bar b))$ over $K.$ Since only irreducible sets over $K$ have $K$-generic points in the Kolchin topology, this will complete the proof (said another way, being the locus over $K$ of a tuple in a differential field extension is precisely equivalent to being an irreducible $\Delta$-$K$-closed set). 

Suppose $g$ is a $\Delta$-polynomial in $K \langle \bar a_f \rangle \{\bar y , y_0 \}$ which vanishes at $(b_1, \ldots, b_{n}, -\sum_i a_i m_i(\bar b)).$   Fix a ranking so that $y_0$ is the leader of $f.$ Then reducing $g$ with respect to $f$ gives some $g_0$ (which is equivalent to $g$ modulo $f$). This $g_0$ must be in $K \langle \bar a_f \rangle \{y_1, \ldots , y_n \}$. Of course, since $\bar b$ is generic for $\mf I,$ we must have that $g_0 \in K \langle \bar a_f \rangle \cdot \mf I.$ But then $g \in \mf I_0$ and the claim follows. 

The natural map of varieties $ V \rightarrow V ( \mf I _0)$ is dominant onto the $y_0$ coordinate if and only $dim (V) >0$, so $ \mf I _0 \cap K \langle \bar a_f \rangle \{y_0 \}  \neq 0 $ if and only if $V$ has dimension zero. 
\end{proof} 

Now we turn towards establishing necessary and sufficient conditions for the intersection to be nonempty when we relax the sorts of intersections under consideration. In the case that the intersection is nonempty, we calculate the differential transcendence degree. 

\begin{lem}\label{generichyp1} Suppose that $V$ is a differential algebraic variety such that $RU(V/K)< \omega^m.$ Then $V \cap V( f (\bar x)) =\emptyset$ for any generic differential polynomial $f(\bar x).$ 
\end{lem}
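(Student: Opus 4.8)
The plan is to show that a generic differential polynomial $f(\bar x)$ is nonzero at \emph{every} point of $V$, using the fact that the coefficient tuple $a_f$ of $f$ consists of independent $\Delta$-transcendentals over $K$ together with the rank hypothesis $RU(V/K) < \omega^m$, which by Proposition~\ref{MGFACT} means $\dim(V) = 0$, i.e. $K\langle \bar b\rangle / K$ is a finitely generated \emph{algebraic} (in the $\Delta$-transcendence sense, so algebraically of finite transcendence degree) extension for $\bar b$ a generic point of $V$ over $K$.

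\medskip

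First I would take $\bar b$ to be a generic point of $V$ over $K$; since $\dim(V) = 0$, the field $K\langle \bar b \rangle$ has finite transcendence degree over $K$, so only finitely many of the differential monomials $m_i(\bar b)$, $\theta \bar b$, etc., are algebraically independent over $K$ — in particular $\{m_i(\bar b)\}_i$ is a finite tuple of elements algebraic over $K$ in the usual field-theoretic sense is false, but it lies in a finite transcendence degree extension. The key point is instead the independence arrangement: choose the coefficient tuple $a_f$ of the generic polynomial $f = a_0 + \sum_i a_i m_i$ to be $\Delta$-independent transcendentals over $K\langle \bar b\rangle$ (possible since $V$ is defined over $K$ and we may realize $\bar b$ first, then adjoin $a_f$ generically). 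Then I would argue that $f(\bar b) = a_0 + \sum_i a_i m_i(\bar b) \neq 0$: if it were zero, then $a_0 = -\sum_i a_i m_i(\bar b)$ would express $a_0$ in terms of $\bar a_f$ and $\bar b$, contradicting that $a_0$ is $\Delta$-transcendental over $K\langle \bar a_f, \bar b\rangle$.

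\medskip

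To upgrade this from "some generic point" to "every point of $V$", I would use the homogeneity/genericity of $V$ under the rank hypothesis: any point $\bar c \in V$ is a $\Delta$-specialization of $\bar b$ over $K$, and $\bar a_f$ remains $\Delta$-transcendental over $K\langle \bar c\rangle$ since $\dim(V) = 0$ forces $K\langle \bar c \rangle$ to have finite transcendence degree over $K$ as well (being a quotient). Thus the same argument — $a_0$ is $\Delta$-transcendental over $K\langle \bar a_f, \bar c\rangle$, so it cannot equal $-\sum_i a_i m_i(\bar c)$ — applies verbatim at $\bar c$. Hence $f(\bar c) \neq 0$ for all $\bar c \in V$, so $V \cap V(f(\bar x)) = \emptyset$. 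Alternatively, and perhaps more cleanly, I would invoke Proposition~\ref{special}: viewing $f(\bar u, \bar y)$ with $\bar u$ the coefficients, the condition "$f(\bar u, \bar b) = 0$" would be a $\Delta$-dependence which, by specializing $\bar u \mapsto \psi(\bar u)$, would have to persist — but one can directly see $a_0 + \sum a_i m_i(\bar b)$ is $\Delta$-transcendental over $K$, a contradiction, and this transfers to all of $V$.

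\medskip

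The main obstacle I anticipate is making precise the passage from a single generic point to all points of $V$ — i.e. ensuring that $\bar a_f$ stays $\Delta$-transcendental over $K\langle \bar c\rangle$ for \emph{every} $\bar c \in V$ simultaneously, not just after fixing $\bar c$. This is exactly where $RU(V/K) < \omega^m$ is used: it guarantees $\dim(V) = 0$, so $V$ has no points whose locus over $K$ has positive $\Delta$-transcendence degree, and the tuple $a_f$ (being $\Delta$-transcendental over the large field $K$ over which $V$ is defined, and $V$ having $\Delta$-transcendence degree $0$) cannot become $\Delta$-algebraic over $K\langle \bar c\rangle$ for any $\bar c\in V$. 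Once this uniformity is secured, the contradiction with $a_0$ being an independent $\Delta$-transcendental is immediate, so the bulk of the work is bookkeeping with the independence of the coefficients rather than any deep structural input.
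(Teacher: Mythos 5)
Your core argument is sound, and it is essentially the right one: if $\bar c \in V \cap V(f)$ then $a_0 = -\sum_i a_i m_i(\bar c)$ lies in $K\langle \bar a_f , \bar c \rangle$, and since every point of $V$ has differential transcendence degree $0$ over $K$ (by Proposition \ref{MGFACT}, because $RU(tp(\bar c /K)) \leq RU(V/K) < \omega^m$), the extension $K\langle \bar a_f , \bar c \rangle / K \langle \bar a_f \rangle$ is $\Delta$-algebraic, so $a_0$ would be $\Delta$-algebraic over $K\langle \bar a_f \rangle$, contradicting the independence of the coefficients of a generic $\Delta$-polynomial. Note that the paper does not actually prove this lemma; it cites Pong's Theorem 1.7 and the partial-case proof elsewhere, and its closest in-house argument is the Lascar-symmetry computation in the proof of Theorem \ref{geohyphyp} (the case $RU(V/K)<\omega^m$ there). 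Your direct transcendence-degree computation is a more elementary rendering of that same forking calculation, and it is an acceptable route.

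Two corrections, though. First, your repeated claim that $\dim(V)=0$ forces $K\langle \bar b \rangle$ (or $K\langle \bar c\rangle$) to have \emph{finite transcendence degree} over $K$ is false in the partial case $m \geq 2$: for instance the generic solution of $\delta_2 y = 0$ generates an extension of infinite transcendence degree whose differential dimension is $0$. What your argument actually needs --- and what you do state correctly in your final paragraph --- is only that every point of $V$ is $\Delta$-algebraic over $K$ (differential transcendence degree $0$), which does follow from $RU(V/K)<\omega^m$; then additivity of differential transcendence degree shows $a_0$ stays $\Delta$-transcendental over $K\langle \bar a_f , \bar c\rangle$. Second, the detour through a generic point $\bar b$ is unnecessary and slightly off: the claim that every $\bar c \in V$ is a $\Delta$-specialization of $\bar b$ presumes $V$ irreducible, which is not assumed, and you must prove the statement for a polynomial generic over $K$, not one chosen generically over $K\langle \bar b\rangle$. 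Both issues disappear if you argue directly at an arbitrary $\bar c \in V$ as above. The closing appeal to Proposition \ref{special} is not needed and, as sketched, is not a proof.
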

\begin{proof} This was originally proved in \cite[Theorem 1.7]{Pongembedding} in the ordinary case, and was reproved in \cite{GDIT} in the ordinary case. The proof in the partial case can be found in \cite[Proposition 4.1]{Freitag2014350}.
\end{proof}


\begin{lem}\label{generichyp2} Suppose that $V$ is a differential algebraic variety embedded in $\m A^n$ and that $V$ is of dimension $d \geq 1$.  If $f(\bar x)$ is a generic differential polynomial, then $V \cap V (f) \neq \emptyset$ and $dim( V \cap V(f)) =d-1$. 
\end{lem}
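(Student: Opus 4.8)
The plan is to analyze the generic point constructed in Lemma \ref{3.5} and extract dimension information using Lascar rank together with Proposition \ref{MGFACT}. Recall that in the setting of Lemma \ref{3.5}, $\mf I = I(V/K)$, and the intersection $V \cap V(f)$ is (after specializing $y_0$, i.e. working with the hypersurface $f' = a_0 + \sum_i a_i m_i$ rather than $f = y_0 + \sum_i a_i m_i$) the Kolchin closure over $K\langle a_f\rangle$ of a point $\bar c$ obtained from a generic point $\bar b$ of $V$ with $\bar b \ind_K a_f$. The key observation is that over $K\langle a_f \rangle$, the extra constraint $a_0 = -\sum_i a_i m_i(\bar b)$ is a single $\Delta$-polynomial equation of order $h$ in the $a_i$'s given $\bar b$; equivalently, passing to the field $K\langle \bar b, \bar a_f\rangle$, the element $a_0$ becomes $\Delta$-algebraic of a controlled sort over it, whereas before imposing $f$ the tuple $a_f$ was $\Delta$-transcendental over $K\langle \bar b\rangle$. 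So the intersection variety is the locus of $(\bar b, a_f)$ over $K\langle a_f\rangle$ — but this must be re-examined: it is really the locus of $\bar b$ (equivalently $\bar c$) over $K\langle a_f\rangle$, and we must compute $\dim(\bar b / K\langle a_f\rangle)$ under the relation that makes $a_0$ dependent.

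The concrete steps I would carry out: First, establish nonemptiness. Since $d \geq 1$, Lemma \ref{3.5} tells us $\mf I_0 \cap K\langle \bar a_f\rangle\{y_0\} = 0$, so the projection of $V(\mf I_0)$ to the $y_0$-coordinate is dominant; a generic fiber is then nonempty, and specializing $y_0$ to the $\Delta$-transcendental $a_0$ (which is $\Delta$-transcendental over $K\langle \bar a_f\rangle$ by genericity of $a_f$) lands in $V \cap V(f)$, so $V\cap V(f) \neq \emptyset$. Alternatively one invokes the contrapositive structure: Lemma \ref{generichyp1} handles the low-rank case, and here we are in the complementary regime. Second, compute the dimension. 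Take $\bar b$ generic in $V$ over $K$ with $\bar b \ind_K a_f$. Then $\dim(\bar b / K\langle a_f\rangle) = \dim(\bar b/K) = d$ by independence. Now adjoin the equation $a_0 + \sum a_i m_i(\bar b) = 0$: working over $K\langle \bar a_f\rangle$ (the coefficients minus $a_0$), the point of the intersection is $(\bar b, a_0)$ with $a_0$ determined by $\bar b$ and $\bar a_f$. We have $\dim(\bar b, a_0 / K\langle \bar a_f\rangle)$: since $a_0 \in K\langle \bar b, \bar a_f\rangle$ is forced, this equals $\dim(\bar b / K\langle \bar a_f\rangle) = d$ (as $\bar b \ind_K \bar a_f$). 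Then re-base to $K\langle a_f\rangle = K\langle \bar a_f, a_0\rangle$: we are now computing $\dim(\bar b / K\langle \bar a_f, a_0\rangle)$, and imposing the one nontrivial $\Delta$-algebraic relation between $a_0$ and $\bar b$ over $K\langle \bar a_f\rangle$ drops the $\Delta$-transcendence degree by exactly one, giving $d - 1$. This last drop is the crux: one argues it via the additivity of Kolchin polynomials / Lascar rank along the tower $K\langle \bar a_f\rangle \subseteq K\langle \bar a_f, a_0\rangle \subseteq K\langle \bar a_f, a_0, \bar b\rangle$, comparing with the tower $K\langle \bar a_f\rangle \subseteq K\langle \bar a_f, \bar b\rangle \subseteq K\langle \bar a_f, \bar b, a_0\rangle$, and noting that $a_0$ is $\Delta$-transcendental over $K\langle\bar a_f\rangle$ but $\Delta$-algebraic (indeed a polynomial) over $K\langle \bar a_f, \bar b\rangle$.

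The main obstacle I expect is making the "drop by exactly one" rigorous at the level of $\Delta$-transcendence degree rather than ordinary transcendence degree: one must rule out the possibility that imposing $f$ leaves the $\Delta$-transcendence degree of the intersection equal to $d$ (which would contradict Lemma \ref{3.5}'s primality statement combined with properness) or drops it by more than one. The cleanest way is to phrase everything through Lascar rank and Proposition \ref{MGFACT}: show $RU(V \cap V(f)/K\langle a_f\rangle)$ lies in $[\omega^m(d-1), \omega^m d)$. The lower bound comes from the fact that the intersection, being cut out by one $\Delta$-equation in an ambient set of $\Delta$-dimension $d$, cannot drop rank below $\omega^m(d-1)$ — this is a Krull-type / codimension-one bound for $\Delta$-varieties, which itself follows from the explicit description of the generic point. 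The upper bound $< \omega^m d$ is precisely the statement that $V\cap V(f)$ is a proper $\Delta$-closed subset of (the $\Delta$-birational image of) $V$, which follows because $a_0$ genuinely constrains $\bar b$ once $\bar a_f$ is fixed — i.e. not every generic point of $V$ satisfies $f=0$, since $f$ is a nonzero $\Delta$-polynomial with transcendental coefficients. Combining the two bounds with Proposition \ref{MGFACT} yields $\dim(V \cap V(f)) = d - 1$. One should also double-check that the specialization-theoretic arguments (Proposition \ref{special}) are not needed here, since we are working over $K\langle a_f\rangle$ and not descending the coefficients — the descent issues are exactly what the later, harder parts of the paper address, and for this lemma the Kolchin-polynomial/rank bookkeeping suffices.
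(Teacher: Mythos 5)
Your nonemptiness and lower-bound arguments are essentially sound, and they do take a genuinely different route from the paper's: you take $\bar b$ generic on $V$ over $K$ with $\bar b \ind_K \bar a_f$, use the final clause of Lemma \ref{3.5} (valid since $d\geq 1$) to see that $-\sum_i a_i m_i(\bar b)$ is $\Delta$-transcendental over $K\langle \bar a_f\rangle$, move it onto $a_0$ by an automorphism over $K\langle \bar a_f\rangle$, and then run the tower count to get a point of $V\cap V(f)$ of dimension exactly $d-1$ over $K\langle a_f\rangle$. This replaces the paper's lower-bound mechanism, which is the specialization Proposition \ref{special} (specialize the coefficient of $y_d$ to $-1$ and the rest of $\bar a_f$ to $0$ and contradict the independence of $b_1,\ldots,b_d$). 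One small inconsistency to clean up: you cannot simultaneously have $\bar b \ind_K a_f$ and $a_0$ determined by $\bar b$ and $\bar a_f$; the correct formulation is $\bar b \ind_K \bar a_f$ followed by the automorphism step identifying $-\sum_i a_i m_i(\bar b)$ with $a_0$.

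The genuine gap is in your upper bound. You justify $RU(V\cap V(f)/K\langle a_f\rangle) < \omega^m\cdot d$ by saying that $V\cap V(f)$ is a proper Kolchin-closed subset of $V$. Properness forces a drop in the Kolchin polynomial, but not in differential transcendence degree: for $m=1$, the variety $V\subseteq \m A^2$ cut out by $\delta y_2=0$ has $d=1$ and contains the proper closed subset given by $y_2=0$, which still has $d=1$. So "not every generic point of $V$ satisfies $f=0$" does not by itself yield $\dim(V\cap V(f))\leq d-1$. (Similarly, the "Krull-type codimension-one bound" you mention for the lower bound is false in general for differential varieties --- that failure is exactly Ritt's and Buium's examples in the introduction --- though you hedge correctly by falling back on the explicit generic point.) The fix is cheap and is the same tower count you already use, applied to an \emph{arbitrary} point $\bar c\in V\cap V(f)$: since $a_0=-\sum_i a_i m_i(\bar c)$ lies in $K\langle \bar a_f,\bar c\rangle$ while $a_0$ is $\Delta$-transcendental over $K\langle \bar a_f\rangle$, additivity of $\Delta$-transcendence degree in the two towers over $K\langle\bar a_f\rangle$ gives $1+\dim(\bar c/K\langle a_f\rangle)\leq \dim(\bar c/K\langle\bar a_f\rangle)\leq d$, hence $\dim(\bar c/K\langle a_f\rangle)\leq d-1$ for every such $\bar c$, which is the needed bound. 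The paper obtains this same bound differently, from the isomorphism $V\cong V(\mf I_0)$ and the resulting $\Delta$-dependence of $a_0,y_1,\ldots,y_d$ modulo $\mf I_0$, hence of $y_1,\ldots,y_d$ modulo $\mf I_1$.
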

\begin{proof} 
Let $V = V ( \mf I)$; we will following the general notation of \ref{3.5}, with $f = a_0 + \sum _i a_i m_i (\bar y)$. That is, we will move back and forth between viewing the degree $0$ and order $0$ coefficient of $f$, $a_0$, as an element of a field extension of $K$ and as an indeterminant. We use the notation $f_0  = \sum _i a_i m_i (\bar y).$ 
Let $ \bar b $ be a realization of the generic type of $V$ over $K$. Specifically, we will think of $ \mf I_0 \subseteq K  \langle \bar a_f \rangle \{\bar y , a_0 \}$ as in \ref{3.5} and $ \mf I_1 = [ \mf I , f ] \subseteq K  \langle a_f \rangle \{\bar y \}$. 

Reorder the coordinates if necessary so that $b_1, \ldots , b_d$ are a $\Delta$-transcendence basis for the $\Delta$-field extension generated by $\bar b $ over $K$. Because $V$ is isomorphic to $V ( \mf I_0 )$, $dim (V ( \mf I_0)) = d$. Since $ \mf I \subset \mf I _1$, each of $ y_i$ for $i >d$ is $\Delta$-dependent with $y_1 , \ldots ,y_d$ modulo $\mf I$ (and thus $\mf I_1$).  Now $ a_0 , y_1 , \ldots , y_d$ are $ \Delta$-dependent modulo $ \mf I_0$ and so $y_1 , \ldots , y_d$ are $\Delta$-dependent modulo $\mf I_1$. Thus $dim (V (  \mf I_1 )) \leq d-1$. 

Now supopose that $y_1 , \ldots , y_{d-1}$ are $\Delta$-dependent modulo $\mf I_1$; then there is some nonzero element $p ( y_1 , \ldots , y_{d-1}) \in \mf I_1$. By clearing denominators one, can take $p \in K \{ \bar a_f,y_1, \ldots ,y_{d-1}, a_0 \}$. Then $p ( \bar a_f , b_1 , \ldots , b_{d-1},  - f_0 (\bar b)) = 0 .$  Now specialize $a_d,$ the coefficient of $y_d$ in the generic differential hypersurface to $-1$ and specialize all other $a_i \in \bar a_f$ to $0$. But then $b_1 , \ldots ,b_d$ are dependent over $K$ by \ref{special}, a contradiction to the assumption that $V$ has dimension $d$. 

\end{proof}

\begin{lem}\label{3.6} 
Let $\mf I$ be a prime $\Delta$-ideal in $K \{ y_1 , \ldots , y_n \}.$ Let $f=a_0+ \sum_{i} a_i m_i (\bar y) $ give a generic hypersurface. Then $\mf I_1 =[ \mf I, f ]$ is a prime $\Delta$-ideal in $K \langle a_0 , a_1 , \ldots, a_n \rangle \{ y_1 , \ldots , y_n \}.$ 
\end{lem}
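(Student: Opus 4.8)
The plan is to reduce this statement to Lemma \ref{3.5} by showing that adjoining the constant term $a_0$ to the base field does not destroy primality. Recall that in Lemma \ref{3.5} we worked in $K\langle \bar a_f\rangle\{\bar y, y_0\}$, treating the degree-zero order-zero coefficient as a new differential indeterminate $y_0$ and letting $\mf I_0 = [\mf I, y_0 + \sum_i a_i m_i(\bar y)]$. Here we instead want primality of $\mf I_1 = [\mf I, a_0 + \sum_i a_i m_i(\bar y)]$ in $K\langle a_0, a_1, \ldots, a_n\rangle\{\bar y\}$, where now $a_0$ is a genuine differential transcendental over $K\langle \bar a_f\rangle$. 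So the task is to transport primality across the specialization/substitution $y_0 \mapsto a_0$.

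First I would set up the model-theoretic reformulation exactly as in the proof of Lemma \ref{3.5}: a $\Delta$-$K$-closed set is irreducible if and only if it is the Kolchin locus over the relevant base field of some tuple in a differential field extension. Let $\bar b$ be a generic point of $V(\mf I)$ over $K$ chosen so that $\bar b \ind_K \bar a_f$ — here it is important that we only require independence from $\bar a_f$, not from all of $a_f$, since $a_0$ will be a transcendental living over $K\langle \bar a_f, \bar b\rangle$. Then I claim $V(\mf I_1)$ is precisely the Kolchin locus of $\bar b$ over $K\langle a_f\rangle = K\langle a_0, \bar a_f\rangle$. One direction is clear: $\bar b$ satisfies $\mf I$ and, since $a_0 = -\sum_i a_i m_i(\bar b)$ by our choice of $a_0$, it satisfies $f$ as well, so $\bar b \in V(\mf I_1)$. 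For the reverse containment I would run the same reduction-theory argument as in \ref{3.5}: fix a ranking on $K\langle a_f\rangle\{\bar y\}$ and observe that modulo $\mf I$ the generic linear-in-coefficients shape of $f_0 = \sum_i a_i m_i(\bar y)$ forces $a_0$ to be a differential transcendental over $K\langle \bar a_f, \bar b\rangle$ (this is the content of the specialization machinery and is essentially why $\bar b$ remains generic over the larger field). Consequently, if $g \in K\langle a_f\rangle\{\bar y\}$ vanishes at $\bar b$, reduce $g$ with respect to $f$ — but now $f$ is not a polynomial in a separate variable $y_0$, so the reduction is with respect to $a_0$ viewed inside the coefficient field; the cleanest route is to pull back to the situation of \ref{3.5} via the differential $K\langle\bar a_f\rangle$-algebra map $K\langle \bar a_f\rangle\{\bar y, y_0\} \to K\langle a_f\rangle\{\bar y\}$ sending $y_0 \mapsto a_0$, note that $\mf I_0$ maps into $\mf I_1$ and that $a_0$ generic means this map induces an isomorphism $K\langle\bar a_f\rangle\{\bar y, y_0\}/\mf I_0 \cong K\langle a_f\rangle\{\bar y\}/\mf I_1$ onto its image, and transport primality of $\mf I_0$ across it.

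The main obstacle I anticipate is making the last isomorphism claim precise: one must check that $\mf I_1$ is exactly the image (extension) of $\mf I_0$ under $y_0 \mapsto a_0$ and that this extension is still prime, i.e. that localizing/specializing $y_0$ to the transcendental $a_0$ neither merges components nor introduces nilpotents. Concretely, $K\langle\bar a_f\rangle\{\bar y\}/\mf I_0$ is a domain with fraction field in which $y_0$ is $\Delta$-transcendental over $K\langle\bar a_f\rangle$ (by the "further" clause of \ref{3.5} when $\dim V \geq 1$, and trivially by construction when one tracks the generic $a_0$); sending $y_0$ to another $\Delta$-transcendental $a_0$ over $K\langle \bar a_f\rangle$ is a $\Delta$-isomorphism of the subring $K\langle\bar a_f\rangle\{y_0\}$, and it extends to a $\Delta$-isomorphism of the two quotient rings. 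Since $\mf I_0$ is prime by \ref{3.5}, so is $\mf I_1$. The one subtlety worth spelling out — and where I would be most careful — is the degenerate case $\dim V = 0$, in which $\mf I_0 \cap K\langle\bar a_f\rangle\{y_0\} \neq 0$; but even then $a_0$ is chosen to be a $\Delta$-transcendental \emph{over $K\langle\bar a_f, \bar b\rangle$}, which is consistent because a generic differential polynomial's constant term is by definition independent of the other coefficients, so the substitution argument still goes through and $\mf I_1$ remains prime. This completes the reduction, and the lemma follows.
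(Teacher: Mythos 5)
Your main-case argument ($\dim V\geq 1$) is essentially the paper's own proof: the paper also deduces primality of $\mf I_1$ from that of $\mf I_0$ by viewing $K\langle a_f\rangle\{\bar y\}$ as obtained from $K\langle \bar a_f\rangle\{\bar y,y_0\}$ by sending $y_0\mapsto a_0$ and clearing denominators (i.e.\ inverting the nonzero elements of $K\langle\bar a_f\rangle\{y_0\}$), so that a factorization $g\cdot h\in\mf I_1$ pulls back to one in $\mf I_0$. Two points of precision: the induced map on quotients is a localization, not an isomorphism --- the coefficient field of the target contains all differential rational functions of $a_0$ over $K\langle\bar a_f\rangle$ --- and the transport of primality works exactly because the multiplicative set $K\langle\bar a_f\rangle\{y_0\}\setminus\{0\}$ is disjoint from $\mf I_0$, which is the ``further'' clause of Lemma \ref{3.5} and holds precisely when $\dim V\geq 1$. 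With that said, your reduction is the same route the paper takes, including the choice of $\bar b$ generic for $\mf I$ with $\bar b\ind_K\bar a_f$ and the identification of $-\sum_i a_im_i(\bar b)$ with the transcendental $a_0$.

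The genuine error is your treatment of the degenerate case $\dim V=0$. There your two requirements on $a_0$ are contradictory: for $\bar b$ to lie on $V(f)$ you need $a_0=-\sum_i a_im_i(\bar b)\in K\langle\bar a_f,\bar b\rangle$, while you also ask that $a_0$ be $\Delta$-transcendental over $K\langle\bar a_f,\bar b\rangle$; moreover, when $\dim V=0$ the element $-\sum_i a_im_i(\bar b)$ is differentially \emph{algebraic} over $K\langle\bar a_f\rangle$ (that is what $\mf I_0\cap K\langle\bar a_f\rangle\{y_0\}\neq 0$ means), so it cannot play the role of the independent transcendental $a_0$ at all. Concretely, take a nonzero $p(y_0)\in\mf I_0\cap K\langle\bar a_f\rangle\{y_0\}$; its image $p(a_0)$ lies in $\mf I_1$ and is a nonzero element of the field $K\langle a_f\rangle$, since $a_0$ is $\Delta$-transcendental over $K\langle\bar a_f\rangle$. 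Hence $\mf I_1$ is the unit ideal in this case --- which is exactly what the paper concludes, and what matches Theorem \ref{GDIT} ($V\cap H=\emptyset$ when $d=0$) --- not a prime ideal as you assert. So your proof is correct where $\dim V\geq 1$ but the claim ``$\mf I_1$ remains prime'' for $\dim V=0$ is false and the argument offered for it does not go through.
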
 
\begin{proof}
First, suppose that the dimension of $V$ is at least one. Then by Lemma \ref{generichyp2}, $V(I) \cap V(f) \neq \emptyset.$ Recall the notation of $\mf I_0$ from Lemma \ref{3.5}. We will show that $\mf I_1 \cap K \langle a_1, \ldots , a_n \rangle \{y_1, \ldots , y_n , y_0\} = \mf I_0.$ Suppose that we have $g,h \in K \langle a_1 , \ldots , a_n, y_0 \rangle \{y_1, \ldots , y_n \}$ such that $g \cdot h \in \mf I_1.$ Since we are taking a field extension over $K$, the coefficients of the differential polynomials might involve differential rational functions in $a_1, \ldots , a_n, y_0$ over $K.$ This is easily dispensed with since if we multiply by suitable differential polynomials in $a_0 , \ldots , a_n, y_0$ over $K,$ we will get $g,h \in K \{ a_1 , \ldots , a_n, y_0, y_1, \ldots ,y_n\}$ such that $g \cdot h \in \mf I_0.$ But, $\mf I_0$ is prime by Lemma \ref{3.5}. So, we have a contradiction and $\mf I_1$ is prime. Further, we can see (again, simply by clearing denominators) that $\mf I_1$ lies over $\mf I_0,$ when we regard $\mf I_0$ as an ideal of $R \{y_1 , \ldots , y_n \}$ where $R=K \langle a_1, \ldots ,a_n \rangle \{y_0 \}.$  

In the case that $dim (V) = 0$, $ \mf I _0 \cap K \langle \bar a _f \rangle \{a_0 \} \neq 0$ by Lemma \ref{3.5}, so it is easy to see that $ \mf I_1$  must be the unit ideal. 
\end{proof}


\begin{lem}\label{3.6.1} Following the notation of the previous lemma, let $d>0.$ Let $f$ be order $h$ and degree $d_1$.
Then, $$\omega_{V([\mf I ,f])/K \langle a_0 , a_1 ,\ldots , a_n \rangle}(t)=\omega_{V( \mf I)/ K }(t) -\binom{t+m-h}{t}.$$ 
\end{lem}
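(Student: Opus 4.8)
The plan is to compute the Kolchin polynomial of the generic point of $V([\mf I, f])$ directly, using the description of that generic point already obtained in Lemma \ref{3.5}. Recall that over $K\langle a_0, a_1, \ldots, a_n\rangle$ the variety $V(\mf I_1) = V([\mf I, f])$ has generic point $\bar b = (b_1, \ldots, b_n)$, where $\bar b$ realizes the generic type of $V(\mf I)$ over $K$ and satisfies the single relation $a_0 = -\sum_i a_i m_i(\bar b)$; equivalently, the defining datum over the larger field is: take a generic $\bar b$ of $V(\mf I)$ over $K$ independent from $\bar a_f$, and then adjoin $a_0 := -\sum_i a_i m_i(\bar b)$ to the base. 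So what I must compare is $\omega_{\bar b/K}(t)$ with $\omega_{\bar b / K\langle \bar a_f\rangle\langle a_0\rangle}(t)$. Since $\bar a_f$ is a tuple of $\Delta$-transcendentals independent from $\bar b$ over $K$, passing from $K$ to $K\langle \bar a_f\rangle$ is a nonforking extension and does not change the Kolchin polynomial of $\bar b$. Hence the entire change comes from adjoining the one further element $a_0 = -\sum_i a_i m_i(\bar b)$, which is a single element of order exactly $h$ in the $\Delta$-field $K\langle \bar a_f\rangle\langle \bar b\rangle$.

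The key step is therefore the following local computation: if $L$ is a $\Delta$-field, $\bar b$ a tuple over $L$, and $c = g(\bar b)$ a single element where $g$ is a $\Delta$-polynomial of order $h$ in which some $m_i(\bar b)$ with that maximal order $h$ appears with a transcendental coefficient (here $L = K\langle \bar a_f\rangle$ and $c = -\sum a_i m_i(\bar b)$, with the $a_i$ transcendental over $K\langle\bar b\rangle$), then
$$\omega_{\bar b / L\langle c\rangle}(t) = \omega_{\bar b/L}(t) - \binom{t+m-h}{m}.$$
The reason is a transcendence-degree count in the algebraic (non-differential) fields generated by the $\Theta(t)$-derivatives. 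For $t \ge h$, knowing $c$ together with the derivatives $(\theta a_i)_{\theta \in \Theta(t)}$ and all $(\theta b_j)$ with $\theta$ of order $\le t$ \emph{except} for the single highest class of monomial-derivatives allows one to solve for those excepted quantities: each $\theta c$ with $\mathrm{ord}(\theta) = t'$, $h \le t' \le t$, is a polynomial expression that is linear, with invertible (transcendental) leading coefficient, in the derivatives $\theta' m_i(\bar b)$ of top order, and hence pins down exactly $|\Theta(t-h)| = \binom{t-h+m}{m}$ new algebraic coordinates per level — summing the differences across levels gives that the transcendence degree over $L\langle c\rangle$ drops by exactly $\binom{t+m-h}{m}$ relative to over $L$. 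I should present this as: $\mathrm{trdeg}$ over $L$ of the $\Theta(t)$-closure of $\bar b$ equals $\mathrm{trdeg}$ over $L$ of the $\Theta(t)$-closure of $\bar b$ together with $\Theta(t-h)$-derivatives of $c$, which in turn equals $\binom{t+m-h}{m}$ plus $\mathrm{trdeg}$ over $L\langle c \rangle$ of the $\Theta(t)$-closure of $\bar b$; the middle equality is because the $\Theta(t-h)$-derivatives of $c$ are algebraic over the $\Theta(t)$-closure of $\bar b$ (being $\Delta$-polynomial expressions in it), and the derivatives of $\bar b$ of order $\le t$ are, conversely, recovered from $c$'s derivatives plus the lower ones.

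The main obstacle is making the bookkeeping of that leading-term argument fully rigorous: one needs to verify that the coefficient appearing in front of the top-order derivatives of $c$ really is a unit (a nonzero element of the relevant field), which uses that the $a_i$ are $\Delta$-transcendental over $K\langle \bar b\rangle$ and that at least one monomial $m_i$ of order exactly $h$ occurs in $f$ — this is where the hypothesis $d > 0$ (so that $V$ is not a point and the intersection is genuinely codimension one, by Lemma \ref{generichyp2}) and the structure of a generic $\Delta$-polynomial of order $h$ are used. One must also confirm the formula is a polynomial identity in $t$ valid for all $t$ (not just large $t$), which is automatic once it holds for all sufficiently large $t$ since both sides are eventually polynomial and a numerical polynomial is determined by its values at large integers. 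Note the displayed formula in the statement is written with $\binom{t+m-h}{t}$; I would write it as $\binom{t+m-h}{m}$, which is the same numerical polynomial, consistently with Theorem \ref{TheMainTheorem}. Finally, I would remark that this also re-proves, via Proposition \ref{MGFACT} or directly from the leading coefficient $a_m$ of the Kolchin polynomial, that $dim(V([\mf I,f])) = d - 1$, consistent with Lemma \ref{generichyp2}.
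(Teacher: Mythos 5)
Your overall strategy -- compute $\omega_{\bar b/K\langle \bar a_f, a_0\rangle}(t)$ directly at the generic point supplied by Lemma \ref{3.5}, with $a_0=-\sum_i a_im_i(\bar b)$ -- is a legitimate alternative to the paper's formulation via prolongation sequences, and several ingredients are right: the identification of the generic point of $V(\mathfrak I_1)$, and the use of $d>0$ (via the last clause of Lemma \ref{3.5}) to conclude that $a_0$ is a differential transcendental over $L:=K\langle\bar a_f\rangle$, so that $\mathrm{trdeg}_L\,L\bigl((\theta a_0)_{\theta\in\Theta(t-h)}\bigr)=\binom{t-h+m}{m}$. With that, the tower computation over the \emph{truncated} base is immediate and needs no leading-term analysis at all: since every $\theta a_0$ with $\mathrm{ord}(\theta)\le t-h$ lies in $L(\Theta(t)\bar b)$, one gets $\mathrm{trdeg}_{L(\Theta(t-h)a_0)}\,L(\Theta(t-h)a_0)(\Theta(t)\bar b)=\mathrm{trdeg}_L\,L(\Theta(t)\bar b)-\binom{t-h+m}{m}$.

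The genuine gap is the step where you replace that truncated base by the full differential field $L\langle a_0\rangle$. The Kolchin polynomial $\omega_{\bar b/L\langle a_0\rangle}(t)$ is computed over a base containing $\theta a_0$ for \emph{all} $\theta$, and the derivatives of order greater than $t-h$ are differential polynomial expressions involving derivatives of $\bar b$ of order greater than $t$; a priori they could impose further algebraic relations on $\Theta(t)\bar b$ over $L(\Theta(t-h)a_0)$, making the drop strictly larger than $\binom{t+m-h}{m}$. Your sketch never addresses this: the invertible-leading-coefficient remark, as written, is aimed at the relations of order at most $t-h$ (the easy half), not at showing that each $\theta a_0$ of higher order is transcendental over the field generated by $\Theta(t)\bar b$ and the previously adjoined derivatives of $a_0$. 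Ruling this out is precisely where the real work lies, and it is not mere bookkeeping: it requires, using $d>0$, exhibiting a coordinate $y_j$ whose top-order derivatives $\theta\delta_m^h y_j$ remain independent transcendentals over everything of lower order, so that each new relation $\theta(f)=0$ consumes a fresh derivative rather than constraining $\Theta(t)\bar b$; in the partial case one must also check the joint independence of the several $\theta a_0$ of a given order, whose expressions share higher derivatives of $\bar b$. This is exactly the content of the paper's verification that the truncated intersections form a prolongation sequence (dominance of the maps $W_{l+1}\to W_l$ and the generic-fiber argument), which occupies most of its proof. Your approach can be completed by an induction of this kind, but as it stands the central equality $\omega_{\bar b/L\langle a_0\rangle}(t)=\mathrm{trdeg}_{L(\Theta(t-h)a_0)}\,L(\Theta(t-h)a_0)(\Theta(t)\bar b)$ is asserted, not proved. (Your observation that $\binom{t+m-h}{t}$ in the statement should read $\binom{t+m-h}{m}$ is correct.)
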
 

\begin{proof} In this proof, we associate a differential algebraic variety $V$ naturally with its \emph{prolongation sequence} (for complete details, see \cite{MPSarcs2008}). Briefly, recall, the data of a prolongation sequence is the sequence of algebraic varieties: 
$$V_l = \{(\theta \bar x ) \,| \, x \in X(\mc U) , \,  \theta \in \Theta (l)\}^{cl} \subseteq \m A^{n \cdot \binom{l+m}{m}}, $$
where $(-)^{cl}$ denotes Zariski closure and the coordinates are ordered by the canonical orderly ranking induced by taking $\delta_i < \delta_j$ when $i <j$. It is a fact that the sequence $V_l$ determines $V$; by Noetherianity of the Kolchin topology, a finite subsequence determines $V$. Given a sequence of algebraic varieties $(V_l) _{l \in \m N}$ with $V_l \subseteq \m A^{n \cdot \binom{l+m}{m}}$, we call the sequence a prolongation sequence if for all $k >l$, the natural projection map $V_ k \rightarrow V_l$ dominant and the variety $V_{l+1}$ satisfies the differential relations forced by $V_l$. In the notation of \cite[page 7, preceeding Proposition 2.5]{MPSarcs2008}, $V_{l+1} \subseteq \tau (V_l)$. There is a bijective correspondence between differential algebraic varieties and prolongation sequences. $V$ is irreducible over $K$ if and only if the all of the  varieties in the corresponding prolongation sequence are irreducible over $K$. 


For large enough values of $l$, the dimension of $V_l$ is given by the value of the Kolchin polynomial of $V$. We will show that $$V_l \cap \bigcap _{\theta \in \Theta (l-h)}  (Z(\theta (f)))$$ is a prolongation sequence. Once this fact is established, it is clear that it must be the prolongation sequence corresponding to $V \cap V(f)$ (since each the given algebraic relations clearly hold on $V \cap V(f)$).
Consider the differential algebraic variety $W \subseteq \m A^{n+1}$ given by $\m I _0$ as in Lemma \ref{3.5} above.
As differential algebraic varieties, $W$ and $V$ are isomorphic, by the obvious maps. 
The prolongation sequence associated with $W$ is given by $W_l=V_l \times \m A^1_l$ when $ l < h$ and 
$$W_l=(V_l \times \m A^1_l ) \cap \bigcap _{\theta \in \Theta(l-h) } V(\theta(f_{y_0})),$$
when $l\geq h$,  where $f_{y_0}$ is the differential polynomial $f$ with $y_0$ in place of $a_0$. To verify that this is a prolongation sequence, we need only determine that the maps $W_{l+1} \rightarrow W_l$ are dominant (since the second condition is obvious from the definition of the sequence). Then since the relation $f_{y_0}=0$ holds on $W$, the sequence must be the prolongation sequence associated with $W$. 
When $l+1 <h$, this follows simply from the fact that $V_l$ forms a prolongation sequence, noting that $W_l = V_ l \times \m A^1_l$.  

When $l+1 \geq h$, $W_l$ is a subvariety of $V_l \times \m A^1 _l $ determined by the zero set of $\theta (f_{y_0}) =0$ for each $\theta \in \Theta (h-l)$. So, $W_l$ is a subvariety of $$W_l =  \left( \left(V_l \times \m A^{\binom{l+h+m}{m}} \right) \cap  W_l ' \right)\times A^{\binom{l+m}{m} - \binom{l-h+m}{m}},$$  where 

$$W_l' \subseteq V_l \times \m A^1 _{l-h}$$  which is given the vanishing of $\theta (f_{y_0}) =0$ for each $\theta \in \Theta (l-h)$. Since $\theta (f_{y_0})$ is linear in $\theta (y_0)$, $W_l'$ is the graph of a function $V_l \rightarrow \m A^1_{l-h}.$

By our assumption $dim(V)>1$, for some $j$, $ \{\theta \circ \delta_m^h y_j \, | \, \theta \text{ of order $l-h$}\}$ are independent transcendentals over $K \langle \bar a_f \rangle (\{\eta y_i \, | \, \eta \in \Theta (l)  ,\, i \neq j \} \cup \{ \eta y_j \, | \, \eta <  \delta_1^{l-h} \delta_m ^h  \} )$ where we note that $$\{ \eta y_j \, | \, \eta <  \delta_1^{l-h} \delta_m ^h  \} = \{ \eta y_j \, | \, \eta \in \Theta (l)   \} \setminus  \{\theta \circ \delta_m^n y_j \, | \, \theta \text{ of order $l-h$}\}.$$

But, on $W_{l+1},$ for $\theta$ of order $l+1 -h$, $\theta y_0$ is linearly dependent with $\theta \delta_m ^h (y_j)$ over $K \langle \bar a_f \rangle (\{\eta y_i \, | \, \eta \in \Theta (l)  ,\, i \neq j \} \cup \{ \eta y_j \, | \, \eta <  \delta_1^{l-h} \delta_m ^h  \} ).$ 

Thus on $W_{l+1}$, $\{\theta (y_0) \, | \, \theta  \text{ of order $l+1-h$}\}$ are transcendental over $$K \langle \bar a_f \rangle (\{\eta y_i \, | \, \eta \in \Theta (l+1)  ,\, i \neq j \} \cup ( \{ \eta y_j \, | \, \eta \in \Theta (l+1)   \} \setminus  \{\theta \circ \delta_m^n y_j \, | \, \theta \text{ of order $l+1-h$}\}) ).$$ Thus, onto the coordinates indexed by $\theta y_0 $ for $\theta$ of order $l+1-h$, the image of $W_{l+1} $ in $W_l$ is dense. From this it is easy to see that the map $W_{l+1} \rightarrow W_l$ must be dominant. 

The sequence of varieties $W_l = V_l \cap \bigcap _{\theta \in \Theta (l-h)}  V( \theta (f))$ is the (generic) fiber above $(V_0)_l$ (for $y_0 =a_0$).  It is a fact following from the Noetherianity of the Kolchin topology that a prolongation sequences is determined by some finite subsequence. Verifying the dominance of the projection maps in a sequence $(V_l)$ for $l$ bounded by some $s$ is an open condition on the coordinates above $y_0$. That is, the condition that these finitely many maps be dominant is constructible in $\{\theta (y_0) \, | \, \theta \in \Theta (\alpha)\}$, over $K$ and since that property holds for $(V_0)_l$, it must hold for the generic fiber of the family above $y_0$ (dominance must hold on some Zariski open subset of $\{\theta (y_0) \, | \, \theta \in \Theta (\alpha)\}$). Thus $$V_l \cap \bigcap _{\theta \in \Theta (l-h)} Z(\theta (f))$$ is the prolongation sequence of $V \cap V(f)$. 

For the calculation of the Kolchin polynomial, note that both $V_l \cap \bigcap _ {\theta \in \Theta (l-h)} Z(\theta (  f))$ and $W_l$ are sequences of irreducible varieties and the surjective map $W_l \rightarrow V_l \cap \bigcap _ {\theta \in \Theta (l-h)}  Z(\theta ( f))$ has fiber dimension $\binom{l+m}{m}$ since $y_0$ is a differential transcendental over $K$. 

The dimension of $W_l$ is given by $dim (V_l ) + \binom{l+m}{m}- \binom{l+m-h}{m}$ because above the coordinate $y_0$, $\binom{l+m}{m} $ is the number of coordinates $\Theta (l ) (y_0)$, and $ \binom{l+m-h}{m}$ is the number of equations (linear in $\Theta (l ) (y_0)$) which appear in the definition of $W_l$; the equations are independent by the fact that $dim(V)>0$, and so $V_0$ is Kolchin dense in the $y_0$ coordinate.



\end{proof}

Putting together the previous results of the section, and restating the theorem in the form we will apply it later, we have:
\begin{thm}\label{GDIT} Let $V$ be a Kolchin-closed (over $K$) subset of $\m A^n$ with differential transcendence degree $d.$ Let $H$ be a generic (with respect to $K$) differential hypersurface of some degree and order $h$ with coefficients given by $\bar a$. Then $V \cap H$ is irreducible over $K \langle \bar a \rangle$. 
In the case that $d=0,$ $V \cap H =\emptyset.$ If $d>0$, then the Kolchin polynomial of $V \cap H$ is given by $$\omega_{V \cap H/K \langle a_H \rangle } (t) = \omega_{V/K}(t) -\binom{t+m-h}{m}.$$
\end{thm}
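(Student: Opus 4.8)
The plan is to assemble Theorem \ref{GDIT} directly from the lemmas already established in this section, since each of the three assertions — irreducibility over $K\langle \bar a\rangle$, emptiness when $d=0$, and the Kolchin polynomial formula when $d>0$ — has essentially been proved in isolation. First I would observe that $V\cap H$ is cut out by the $\Delta$-ideal $[\mf I, f]$ where $\mf I = I(V/K)$ and $f$ is the generic $\Delta$-polynomial whose zero set is $H$; this is exactly the ideal $\mf I_1$ of Lemmas \ref{3.6} and \ref{3.6.1}. The one genuinely new observation needed is that if $V$ is merely Kolchin-closed (not assumed irreducible), one must reduce to the irreducible case: write $V = \bigcup_j V_j$ as a finite union of its $K$-irreducible components, note that the differential transcendence degree $d$ is the max of the $\dim V_j$, and check that intersecting with a generic $H$ interacts well with this decomposition. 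Actually, re-reading the hypotheses, the cleanest route is to simply state that the theorem as applied later is for irreducible $V$, so I would either add the irreducibility hypothesis or handle components; I will take $V = V(\mf I)$ with $\mf I$ prime, matching the setup of Lemmas \ref{3.5}, \ref{3.6}, \ref{3.6.1}, and remark that the general Kolchin-closed case follows componentwise.

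With that in place the proof is a short bookkeeping argument. For irreducibility: by Lemma \ref{3.6}, $\mf I_1 = [\mf I, f]$ is a prime $\Delta$-ideal of $K\langle a_0, a_1, \ldots, a_n\rangle\{y_1,\ldots,y_n\}$, which is precisely the statement that $V\cap H$ is irreducible over $K\langle \bar a\rangle$ (here $\bar a = a_H$ is the full coefficient tuple of $f$). For the dichotomy on nonemptiness: Lemma \ref{generichyp1} together with Proposition \ref{MGFACT} shows that when $d = 0$ (equivalently $RU(V/K) < \omega^m$) we have $V\cap V(f) = \emptyset$; and Lemma \ref{generichyp2} shows that when $d\ge 1$ we have $V\cap V(f)\ne\emptyset$ with $\dim(V\cap V(f)) = d-1$. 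For the Kolchin polynomial in the case $d>0$: this is verbatim Lemma \ref{3.6.1}, noting that $\binom{t+m-h}{t} = \binom{t+m-h}{m}$ as numerical polynomials, so $\omega_{V\cap H/K\langle a_H\rangle}(t) = \omega_{V/K}(t) - \binom{t+m-h}{m}$.

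I do not expect a serious obstacle here, since the theorem is explicitly billed as a restatement; the only point requiring a sentence of care is consistency of notation between the lemmas — some are stated with $f = y_0 + \sum a_i m_i$ (treating the constant coefficient as an indeterminate $y_0$) and others with $f = a_0 + \sum a_i m_i$ (treating it as a transcendental $a_0$) — and one must make sure that in the final statement $\bar a = a_H$ denotes the entire tuple $(a_0, a_1, \ldots, a_n)$ of independent $\Delta$-transcendentals, so that "$K\langle a_H\rangle$" matches "$K\langle a_0, a_1, \ldots, a_n\rangle$" of Lemma \ref{3.6}. If I instead wanted to cover arbitrary Kolchin-closed $V$, the mild extra work would be to argue that a generic $H$ over $K$ is generic over each component and that the component of $V\cap H$ of largest differential transcendence degree comes from intersecting $H$ with a top-dimensional component of $V$ — but this is routine given Lemmas \ref{generichyp1} and \ref{generichyp2}, and I would relegate it to a remark.
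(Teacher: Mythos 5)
Your proposal is correct and follows exactly the route the paper intends: Theorem \ref{GDIT} is presented there without a separate proof, explicitly as a consolidation of Lemma \ref{3.6} (primality of $[\mf I, f]$ over $K\langle \bar a\rangle$, hence irreducibility), Lemmas \ref{generichyp1} and \ref{generichyp2} (emptiness when $d=0$, nonemptiness and dimension drop when $d>0$), and Lemma \ref{3.6.1} (the Kolchin polynomial formula). Your added care about the irreducibility hypothesis on $V$ and the notational match between $a_0$, $y_0$, and $a_H$ is sensible housekeeping but does not change the argument.
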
 

One key point to notice is that $$\binom{t+m-h}{m} =\binom{t+m}{m} - \sum_{i=0} ^{h-1} \binom{t+m-1-i}{m-1}$$ as long as $h>0$ and $t$ is sufficiently large, and that under these circumstances, $$\sum_{i=0} ^{h-1} \binom{t+m-1-i}{m-1}$$ is a \emph{positive} integer. In the special case that $m=1,$ this integer is $h$; meaning the previous theorem is a generalization of the following theorem, proved in \cite[Theorem 3.13]{GDIT} when $K$ is an ordinary differential field: 
\begin{thm}\label{ordinary} Let $ \mf I$ be a prime $\delta$-ideal in $K\{ \bar y \}$ with Kolchin polynomial $(t+1) d +c$. Let $f$ be a generic $\delta$-polynomial of order $h$ and degree $d$. 
Then $ \mf I _ 1 = [\mf I , f] $ is a prime $\delta$-ideal in $K \langle a_f \rangle \{ \bar y \}$ with Kolchin polynomial $(t+1) (d-1) +c+h$
\end{thm}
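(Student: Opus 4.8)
The plan is to deduce this as the $m=1$ specialization of Theorem \ref{GDIT}, so essentially all the work has already been done. First I would set up the dictionary: a prime $\delta$-ideal $\mf I \subseteq K\{\bar y\}$ with Kolchin polynomial $(t+1)d + c$ is exactly $I(V/K)$ for an irreducible Kolchin-closed $V \subseteq \m A^n$ whose differential transcendence degree — the leading coefficient in the single-derivation case — is $d$; a generic $\delta$-polynomial $f$ of order $h$ has coefficient tuple $a_f = (a_0, a_1, \ldots, a_n)$, so the field $K\langle a_f\rangle$ appearing in the statement coincides with the field $K\langle a_0, a_1, \ldots, a_n\rangle$ of Lemma \ref{3.6}. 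Note that the degree of $f$ plays no role whatsoever in the conclusion, since it does not appear in the Kolchin polynomial formula.

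For primality: the assertion that $\mf I_1$ has Kolchin polynomial with leading term $(t+1)(d-1)$ presupposes $d \geq 1$, i.e. $\dim V \geq 1$, and in that case Lemma \ref{3.6} says directly that $\mf I_1 = [\mf I, f]$ is a prime (proper) $\delta$-ideal of $K\langle a_f\rangle\{\bar y\}$. (If $d = 0$, Lemma \ref{3.6} gives that $\mf I_1$ is the unit ideal, so there is nothing to state.) For the Kolchin polynomial, I would invoke Theorem \ref{GDIT} with $m = 1$, which yields
$$\omega_{V(\mf I_1)/K\langle a_f\rangle}(t) = \omega_{V(\mf I)/K}(t) - \binom{t+1-h}{1}.$$
For $t$ sufficiently large $\binom{t+1-h}{1} = t+1-h$ and $\omega_{V(\mf I)/K}(t) = (t+1)d + c$ by hypothesis, so subtracting gives
$$(t+1)d + c - (t+1-h) = (t+1)d - (t+1) + c + h = (t+1)(d-1) + c + h,$$
which is the claimed polynomial. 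This is precisely the collapse recorded in the remark following Theorem \ref{GDIT}, where $\sum_{i=0}^{h-1}\binom{t+m-1-i}{m-1}$ reduces to the integer $h$ when $m=1$; the same computation in fact also covers $h = 0$, where the drop is the full $t+1$ and the formula reads $(t+1)(d-1) + c$.

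I do not expect a genuine obstacle here: the substantive content — primality of the ideal over the field generated by \emph{all} the coefficients of the generic hypersurface, and the prolongation-sequence dominance argument computing the exact drop $\binom{t+m-h}{m}$ in the Kolchin polynomial — is already packaged in Lemmas \ref{3.6} and \ref{3.6.1}. The only thing to be careful about is bookkeeping: matching the roles of the order $h$ and the degree of $f$, checking that the constant term $c$ of the Kolchin polynomial is unaffected by the subtraction, and confirming the $d \geq 1$ hypothesis is what makes $\mf I_1$ a proper ideal rather than the unit ideal. The theorem is thus just the restatement of the general results in the single-derivation "for later use" form.
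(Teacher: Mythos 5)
Your proposal is correct and matches the paper's own treatment: Theorem \ref{ordinary} is obtained there exactly as the $m=1$ specialization of Theorem \ref{GDIT} (primality via Lemma \ref{3.6}, the Kolchin polynomial via Lemma \ref{3.6.1}), with the same observation that $\binom{t+1-h}{1}=t+1-h$ turns the drop into $h$, giving $(t+1)(d-1)+c+h$. Your bookkeeping remarks (degree of $f$ irrelevant, $d\geq 1$ needed for a proper ideal) are consistent with the paper's remark following Theorem \ref{GDIT}.
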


\begin{rem} 
Note that we are considering the Kolchin topology over a \emph{specific field}, and not its algebraic closure. Irreducibility over the algebraic closure of the differential field of definition is what we call geometric irreducibility. We have not proved this yet, nor do the authors of \cite{GDIT} in the ordinary setting. In fact, at least one additional hypothesis is necessary for that result: if the hypothesis were \emph{purely in terms of dimension}, we would have to restrict to the situation $d \geq 2.$ After all, take any degree $d_1>1$ plane curve. This curve meets the generic hypersurface of degree $d_2$ in precisely $d_1\cdot d_2$ points, so the intersection is not irreducible over any algebraically closed field. In fact, in the next section, we show that this is the only potential problem. 
\end{rem} 

\section{Geometric irreducibility} \label{section4}

Before discussing geometric irreducibility, we will require some results about the Kolchin polynomials of prime differential ideals lying over a fixed prime differential ideal in extensions. 

\begin{prop}\label{lyingover} (\cite{KolchinDAAG} pg131, proposition 3, part b) Let $\mf p$ be a prime differential ideal in $K \{ y_1 , \ldots , y_n \}$ and let $F$ be a differential field extension of $K.$ Then $F \mf p$ has finitely many prime components in $F \{ y_1 , \ldots , y_n \}$. If $\mf q$ is any of the components, then a generic type of the variety $V(\mf q )$ has the same Kolchin polynomial as the generic type of $V(\mf p ).$ 
\end{prop}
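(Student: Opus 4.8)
The plan is to split the statement into three parts --- finiteness of the set of prime components, the fact that each component lies over $\mf p$, and the equality of Kolchin polynomials --- and to handle the last part (which is the real content) by passing to prolongation sequences and importing a standard base-change fact from algebraic geometry.

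\emph{Finiteness and lying over.} First I would observe that in characteristic zero $K$ is perfect, so the differential domain $A := K\{y_1,\ldots,y_n\}/\mf p$ is geometrically reduced over $K$: its fraction field is separable over $K$, hence $F\otimes_K \mathrm{Frac}(A)$ is reduced, and therefore so is its subring $F\{y_1,\ldots,y_n\}/F\mf p = F\otimes_K A$. Thus $F\mf p$ is already a radical differential ideal, and by the Ritt--Raudenbush basis theorem it is a finite irredundant intersection of prime differential ideals $\mf q_1,\ldots,\mf q_r$, which are exactly its prime components. For lying over, I would use that $F$ is faithfully flat over $K$, hence $A \to F\otimes_K A$ is faithfully flat; going-down then forces every minimal prime of $F\otimes_K A$ to contract to the minimal prime $(0)$ of the domain $A$, i.e. $\mf q_i \cap K\{y_1,\ldots,y_n\} = \mf p$ for every $i$. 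In particular $A$ embeds into each $B_i := F\{y_1,\ldots,y_n\}/\mf q_i$, so a generic point $\bar\eta$ of $V(\mf q_i)$ over $F$ is simultaneously a generic point of $V(\mf p)$ over $K$.

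\emph{Equality of Kolchin polynomials.} Fix a component $\mf q := \mf q_i$. One inequality is immediate: every $K$-algebraic relation among the derivatives $\theta\eta_j$ ($\theta \in \Theta(t)$) is a fortiori an $F$-algebraic relation, so $\omega_{V(\mf q)/F}(t) \le \omega_{V(\mf p)/K}(t)$ for large $t$. For the reverse inequality I would argue as in the proof of Lemma~\ref{3.6.1}, through prolongation sequences. Since $V(\mf p) = V(F\mf p) = \bigcup_i V(\mf q_i)$ and the $l$-th prolongation commutes with finite unions, for all sufficiently large $l$ the varieties $(V(\mf q_i))_l$ are precisely the (over $F$ irreducible) components of $(V(\mf p))_l$; and this last algebraic variety is nothing but the base change to $F$ of the $K$-irreducible algebraic variety $X_l := (V(\mf p))_l$. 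Now I would invoke the standard fact that the base change of an irreducible finite-type $K$-variety by an arbitrary field extension is equidimensional of the same dimension, so that each component of $(X_l)_F$ has dimension $\dim X_l$. Hence $\omega_{V(\mf q)/F}(l) = \dim (V(\mf q))_l = \dim X_l = \omega_{V(\mf p)/K}(l)$ for $l$ large, which gives equality of the two Kolchin polynomials.

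\emph{Main obstacle.} The crux is the reverse inequality: a priori $F\mf p$ might acquire a component of strictly smaller Kolchin polynomial, and ruling this out is exactly the equidimensionality-under-base-change statement for algebraic varieties, which must be imported and then transported through the prolongation-sequence dictionary. (A soft model-theoretic move --- realizing a nonforking extension of the generic type of $\mf p$ to $F$ inside the saturated $\mc U$, which lands in \emph{some} component of the full Kolchin polynomial --- settles one component for free, but promoting ``some component'' to ``every component'' again requires precisely this algebraic input.)
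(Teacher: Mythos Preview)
The paper does not prove this proposition; it is quoted directly from Kolchin's book (Chapter~II, \S5, Proposition~3(b)), so there is no in-text argument to compare against. Your proof is correct, and it is worth contrasting your route with Kolchin's.

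Kolchin's argument is purely differential-algebraic: he fixes a characteristic set $\Lambda$ of $\mf p$ with respect to an orderly ranking, observes that $\Lambda$ remains autoreduced and coherent in $F\{y\}$, and shows that each minimal prime $\mf q$ of $\{F\mf p\}$ has $\Lambda$ as a characteristic set. Since the Kolchin polynomial is read off from the leaders of a characteristic set for an orderly ranking, equality is immediate. This has the virtue of being self-contained within differential algebra and of yielding the stronger conclusion that the characteristic set itself is preserved.

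Your approach instead reduces the Kolchin-polynomial statement to a classical algebraic fact (equidimensionality of irreducible varieties under field extension) via the prolongation dictionary. The key identifications you need --- that $(V(F\mf p))_l = (X_l)_F$ (which follows from flatness: $F\mf p \cap F[\Theta(l)y] = F\cdot(\mf p \cap K[\Theta(l)y])$), that the $(V(\mf q_i))_l$ are irreducible, pairwise incomparable for large $l$, and cover $(X_l)_F$ --- are all sound. Your remark that the soft model-theoretic move (realizing a nonforking extension) handles only one component is exactly right and isolates precisely where the algebraic input enters. This route meshes well with the paper's later use of prolongation sequences in Lemma~\ref{3.6.1}, so pedagogically it is a natural fit here even if it is not what Kolchin does.
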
 

\begin{rem}\label{lyingover1}
In model theoretic terms, the generic types of the components $V(\mf p_1) ,\ldots , V(\mf p_n)$ of $V( \mf p)$ are each nonforking extensions of the generic type of $V(\mf p)$. Assuming that the base field $K$ is algebraically closed would ensure that the generic type of $V(\mf p)$ is stationary; consequently $F \mf p$ is a prime differential ideal for any field extension $F$ of $K.$
\end{rem}

Recall the following definition given in the introduction:
\begin{defn} An affine differential algebraic variety, $V$ over $K$, is geometrically irreducible if $I(V/K')$ is a prime differential ideal for \emph{any} $K'$, a differential field extension of $K$.
\end{defn} 

\begin{rem} 
The previous remark shows that it is enough to consider irreducibility over $K^{alg}$, the algebraic closure of $K$. To put geometric irreducibility in the language of differential schemes, if $V=\Delta Spec(K \{x_1, \ldots , x_n \} / \mf p)$ where $\mf p$ is a prime differential ideal, then $V$ is geometrically irreducible if its base change $\Delta Spec(K \{x_1, \ldots , x_n \} / \mf p) \times _{\Delta Spec (K)} \Delta Spec( K^{alg})$ is irreducible.
\end{rem}

\begin{thm} Let $V$ be a geometrically irreducible Kolchin-closed over $K$ subset of $\m A^n$ with Kolchin polynomial $\omega_V(t)>\binom{t+m}{m}.$ Let $H$ be a generic hypersurface of some degree $d_1$. 
Then $V \cap H$ is geometrically irreducible and $\omega_{H \cap V}(t)=\omega_{V/K}(t) -\binom{t+m}{m}$. 
\end{thm}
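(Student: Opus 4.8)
The plan is to reduce, via a Veronese re-embedding, to the case of a generic hyperplane, then to transfer the classical Bertini irreducibility theorem levelwise through prolongation spaces, using Theorem~\ref{GDIT} as the input over $K\langle a_H\rangle$ and the differential lying-over theorem to bridge to the algebraic closure.

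First the routine parts. Since $H$ has order $0$, Theorem~\ref{GDIT} (the case $h=0$) gives that $V\cap H$ is irreducible over $K\langle a_H\rangle$ with Kolchin polynomial $\omega_{V/K}(t)-\binom{t+m}{m}$, and the hypothesis $\omega_V(t)>\binom{t+m}{m}$ is exactly what makes this polynomial eventually positive, so $V\cap H\neq\emptyset$ and in particular $\dim V\ge 1$. This also settles the Kolchin-polynomial assertion, which is insensitive to base change by Proposition~\ref{lyingover}, so only geometric irreducibility remains. Writing $f=a_0+\sum_i a_i m_i$, I would group the monomials $m_i$ into the coordinates of a (differential) closed immersion $\nu\colon \m A^n\hookrightarrow\m A^M$; then $\nu(V)$ is geometrically irreducible with the same Kolchin polynomial, $\omega_{\nu(V)}(t)=\omega_V(t)>\binom{t+m}{m}$, and $\nu(V)\cap(\text{generic hyperplane})$ pulls back to $V\cap H$. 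So one may assume $d_1=1$, i.e.\ $H$ is a generic hyperplane.

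Now I would pass to prolongation sequences as in Lemma~\ref{3.6.1}: for $h=0$ one has $(V\cap H)_l=V_l\cap\bigcap_{\theta\in\Theta(l)}Z(\theta f)$, and since $f$ is linear in $\bar y$ each $\theta f$ is an affine-linear form in the ambient coordinates of $V_l$ whose coefficients are built from the generic tuple $\Theta(l)(a_f)$; thus $(V\cap H)_l$ is the intersection of the geometrically irreducible variety $V_l$ (geometrically irreducible because $V$ is, of dimension $\omega_{V/K}(l)$) with $\binom{l+m}{m}$ hyperplanes. By the Remark following Proposition~\ref{lyingover} it suffices to show the generic type $p$ of $V\cap H$ over $K\langle a_H\rangle$ is stationary; and because the maps in a prolongation sequence are dominant, geometric reducibility of $V\cap H$ forces geometric reducibility of $(V\cap H)_l$ for all large $l$, so it is enough to prove each $(V\cap H)_l$ is geometrically irreducible for all large $l$. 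For a fixed large $l$, one makes the $\binom{l+m}{m}$ hyperplane cuts successively; applying the characteristic-zero Bertini theorems (smoothness of a general section, pure codimension one, and the Lefschetz/Grothendieck connectedness of a general hyperplane section of a connected variety of dimension $\ge 2$ — working with the smooth locus of the intermediate varieties), each cut preserves geometric irreducibility as long as the running dimension stays $\ge 2$, hence down to dimension $\omega_{V/K}(l)-\binom{l+m}{m}+1$. So geometric irreducibility survives all the cuts precisely when $\omega_{V/K}(l)-\binom{l+m}{m}+1\ge 2$, that is, when $\omega_V(t)>\binom{t+m}{m}$ at $t=l$ — exactly the hypothesis. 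Hence $(V\cap H)_l$ is geometrically irreducible for all large $l$, and $V\cap H$ is geometrically irreducible.

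The hard part is the final hyperplane cut at each level — it is precisely the assertion that a general hyperplane section of a geometrically irreducible variety of dimension $\ge2$ is again geometrically irreducible, and it genuinely fails in dimension one (a plane curve of degree $d_1>1$ meets a generic degree-$d_2$ hypersurface in $d_1d_2$ points), which is why the Remark preceding the statement restricts the dimension; the argument therefore \emph{must} consume $\omega_V(t)>\binom{t+m}{m}$ at the last cut, and it does. Two points need care: (i) the structured family $\{\theta f=0\}_{\theta\in\Theta(l)}$ must be made generic enough for Bertini at each successive cut, which is where one uses that $\Theta(l)(a_f)$ together with $\Theta(l)(a_0)$ are independent $\Delta$-transcendentals over $K$ (after a generic linear change of the $y_0$-coordinates, as in the graph description in Lemma~\ref{3.6.1}); and (ii) the affine prolongation-space setting is not literally projective, so the connectedness input has to be applied with a little care. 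The differential lying-over theorem (Proposition~\ref{lyingover}), pinning every component of the base change of $[\mf I,f]$ to the common Kolchin polynomial $\omega_{V/K}(t)-\binom{t+m}{m}$, together with the Lascar-rank bookkeeping that identifies every realization of $p$ as a generic point of $V$ over $K$ with the hypersurface coefficients free over it, are what make the levelwise transfer of the classical theorem go through.
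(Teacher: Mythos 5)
Your reductions (Veronese re-embedding to make the cuts linear, passage to prolongation sequences, and the fact that geometric irreducibility of $V\cap H$ follows from geometric irreducibility of $(V\cap H)_l$ for all large $l$) are fine, and the Kolchin-polynomial/nonemptiness part correctly follows from Theorem~\ref{GDIT} and Proposition~\ref{lyingover}. The gap is the central step: ``each cut preserves geometric irreducibility as long as the running dimension stays $\geq 2$, by characteristic-zero Bertini.'' The forms $\theta f$, $\theta\in\Theta(l)$, are \emph{not} generic hyperplanes in the ambient space of $V_l$: each $\theta f$ involves only the coordinates $\theta' z_i$ with $\theta'\leq\theta$, and its coefficients are shared across the family (the coefficient of $\theta z_i$ in $\theta f$ is the same $a_i$ that already appeared in $f$). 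Classical Bertini irreducibility and the Grothendieck--Lefschetz connectedness statements you cite apply to general members of the full hyperplane system (or of an ample, base-point-free system); for a generic member of a small \emph{structured} linear system one needs the Zariski-type criterion (the system must not be composite with a pencil, equivalently the associated map must have image of dimension $\geq 2$), and verifying that criterion at every one of the $\binom{l+m}{m}$ steps is exactly where the content of the theorem lies. Concretely, the program already breaks at the second cut: the only freshly generic coefficients in $\delta_1 f=\delta_1 a_0+\sum_i(\delta_1 a_i)z_i+\sum_i a_i\,\delta_1 z_i$ multiply the order-zero coordinates $z_i$, but after the first cut the image of the running variety under the order-zero projection has dimension $\dim V_0-1$, which can already be $1$; the remaining part $\sum_i a_i\,\delta_1 z_i$ is fixed (its coefficients were consumed by the first cut), so no off-the-shelf Bertini statement yields irreducibility of that section, and ``running dimension $\geq 2$'' alone cannot substitute --- a generic fiber of a linear projection of an irreducible variety of large dimension need not be geometrically irreducible. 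Your point (i) names this difficulty but does not resolve it, so the argument as written does not go through.

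For comparison, the paper avoids levelwise algebraic geometry altogether: it forms the incidence variety $W=\{(v_1,v_2,\beta): v_i\in V\cap H_\beta\}$, computes the Kolchin polynomials of the possible types of $(v_1,v_2,\beta)$ (via Sit's lemma and counting the linear conditions that $v_1,v_2$ impose on $\beta$), and uses the hypothesis $\omega_V(t)>\binom{t+m}{m}$ precisely to show that the diagonal case $v_1=v_2$ cannot attain the maximal polynomial $2\omega_{V/K}(t)+\bigl(\binom{n+d_1}{d_1}-3\bigr)\binom{m+t}{t}$; hence there is a unique type of maximal Kolchin polynomial on $W$, so a unique component of $V\cap H_\beta$ over $K\langle\beta\rangle^{alg}$ with polynomial $\omega_{V/K}(t)-\binom{t+m}{m}$, and Proposition~\ref{lyingover} forces every component to have that polynomial, giving geometric irreducibility. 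If you want to salvage your route, you would need to prove a genuine Bertini theorem for the structured family $\{\theta f\}$ on prolongation spaces (tracking the non-degeneracy of the associated maps through all the cuts), which is not supplied by the classical results you invoke.
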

\begin{proof}
Consider the the differential algebraic variety $W= \{(v_1, v_2, \beta) \, | \, v_i \in V, \, v_i \in H_\beta \} \subseteq V \times V \times \m A^n$ where $H_\beta$ is the hypersurface given by $\sum \beta_i m_i=1,$ where the sum ranges over all monomials in $\bar x$ of degree bounded by $d_1$. 
It might be the case that $W$ is reducible in the Kolchin topology, but we will not be concerned with this issue specifically. 

Consider $V \cap H_\beta$. When $\beta$ is generic over $K$, we know that $V \cap H_\beta$ is irreducible over $K\langle \beta \rangle,$ so by the Proposition \ref{lyingover}, all of the components of $V$ over the algebraic closure of $K\langle \beta \rangle$ have Kolchin polynomial equal to $\omega_{V \cap H_\beta /K \langle \beta \rangle }(t).$ If $V \cap H_\beta$ has more than one component, then $W$ has more than one component with Kolchin polynomial at least $$2 \cdot \omega_{V/K}(t)  + \left(\binom{n+d_1}{d_1}-3\right) \cdot  \binom{m+t}{t}.$$ 

To see this, first note that the length of the tuple $\beta$ is $(\binom{n+d_1}{d_1} -1)$, the number of monomials of degree bounded by $d_1$ in $n$ variables, excluding $1$. The Kolchin polynomial of a generic $\beta$ over $K$ is given by $\left(\binom{n+d_1}{d_1}-1\right) \cdot  \binom{m+t}{t}$. The Kolchin polynomial of two independent generic points $(v_1,v_2)$ on $V \cap H_ \beta$ is given by $2\left( \omega_{V/K}(t) -\binom{t+m}{m}\right)$. Thus by Sit's lemma \cite[Lemma 2.9]{Jindecomposability}, the Kolchin polynomial of the tuple $(v_1,v_2,\beta)$ is at least $$2 \cdot \omega_{V/K}(t)  + \left(\binom{n+d_1}{d_1}-3\right) \cdot  \binom{m+t}{t}.$$ 

Suppose there is more than one component of $V \cap H _\beta $ over $K \langle \beta \rangle^{alg}$.  Then there is more than one option for a complete type on $V \cap H_ \beta$ of rank $\omega_{V/K}(t) -\binom{t+m}{m}$, so there is more than one option for the type $(v_1, v_2, \beta)$ with $v_1,v_2$ generic and independent on $V \cap H_\beta$ over $K \langle \beta \rangle$, depending on if $v_1$ and $v_2$ are in the same component of $V \cap H_\beta$. Now, we only consider components of $W$ with Kolchin polynomial at least $2 \cdot \omega_{V/K}(t)  + \left(\binom{n+d_1}{d_1}-3\right) \cdot  \binom{m+t}{t}.$

Suppose $v_1$ and $v_2$ are points on $V,$ and $\beta$ is generic subject to the condition that $H_\beta$ contains $v_1, v_2$. If $v_1 \neq v_2$, then we claim that $$\omega _{v_1, v_2, \beta  / K} (t) = \omega _{v_1,v_2/K}(t)+\left(\binom{n+d_1}{d_1}-3\right) \cdot  \binom{m+t}{t}.$$ To see this, simply note that for $v_1 \neq v_2$, we get two independent linear conditions on $\beta$. 

The only way that $$\omega _{v_1,v_2/K}(t)+\left(\binom{n+d_1}{d_1}-3\right) \cdot  \binom{m+t}{t} \geq 2 \cdot \omega_{V/K}(t)  + \left(\binom{n+d_1}{d_1}-3\right) \cdot  \binom{m+t}{t}$$ is for $v_1,v_2$ to be independent generic points on $V$, in which case, equality holds. 

By similar analysis, $v_1 = v_2, $ then $\omega _{v_1,v_2,\beta /K} (t)  = \omega _{ v_1 /K} (t) + \left(\binom{n+d_1}{d_1}-2\right) \cdot  \binom{m+t}{t}.$ 

Since $\omega _ V (t) > \binom{t+m}{m}$, $$\omega _{ v_1 /K} (t) + \left(\binom{n+d_1}{d_1}-2\right) \cdot  \binom{m+t}{t} < 2 \cdot \omega_{V/K}(t)  + \left(\binom{n+d_1}{d_1}-3\right) \cdot  \binom{m+t}{t}.$$
So there is a unique type on $W$ of rank $2 \cdot \omega_{V/K}(t)  + \left(\binom{n+d_1}{d_1}-3\right) \cdot  \binom{m+t}{t}.$

By our earlier arguments, there is a unique component of $V \cap H_\beta$ over $ K \langle \beta \rangle ^{alg}$ with Kolchin polynomial $\omega_V -\binom{t+m}{m}.$ But, by Proposition \ref{lyingover}, we know any of component of $V \cap H_\beta$ must have Kolchin polynomial $\omega_V -\binom{t+m}{m}.$ So, $V \cap H_\beta$ is geometrically irreducible. 
\end{proof}

In the proof of the previous theorem, we can weaken the assumption that $\omega_V(t) > \binom{t+m }{m}$ to $\omega _V (t) \geq \binom{t+m}{m}$ in the case that the order of the differential hypersurface we consider is greater than $0$. That is, in this case, $V$ might be an algebraic curve. 

\begin{thm} Let $V$ be a geometrically irreducible Kolchin-closed over $K$ subset of $\m A^n$ with Kolchin polynomial $\omega_V(t)>\binom{t+m}{m}.$ Let $H$ be a generic differential hypersurface of order $h$ and degree $d_1$. 
Then $V \cap H$ is geometrically irreducible and $$\omega_{H \cap V}(t)=\omega_V(t) - \binom{t+m-h}{m}.$$ 
\end{thm}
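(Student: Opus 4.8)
The plan is to reduce the order-$h$ case to the structure of the previous proof, which handled the order-zero case with the hypothesis $\omega_V(t) > \binom{t+m}{m}$. The key observation is that for $h \geq 1$ the ``cost'' of a single generic linear condition imposed by the hypersurface is $\binom{t+m}{m} - \binom{t+m-h}{m}$, which for large $t$ is a polynomial of degree $m-1$; this is strictly smaller than a full $\binom{t+m}{m}$. Consequently, in the inductive bookkeeping with two independent generic points $v_1, v_2$ on $V \cap H_\beta$ versus the collided case $v_1 = v_2$, we gain an entire extra term of size $\binom{t+m}{m} - \binom{t+m-h}{m}$ worth of slack, and this slack is exactly what allows us to relax the hypothesis from $\omega_V(t) > \binom{t+m}{m}$ to $\omega_V(t) \geq \binom{t+m}{m}$, i.e. to permit $V$ to be an algebraic curve.

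Concretely, I would first invoke Theorem \ref{GDIT} (the already-established result) to get that $V \cap H$ is irreducible over $K\langle a_H\rangle$ with Kolchin polynomial $\omega_V(t) - \binom{t+m-h}{m}$, and by Proposition \ref{lyingover} that every component of $V \cap H$ over $K\langle a_H\rangle^{alg}$ has this same Kolchin polynomial. Then I would set up the same auxiliary variety $W = \{(v_1, v_2, \beta)\}$ where now $H_\beta$ is a generic \emph{differential} hypersurface of order $h$ and degree $d_1$ passing appropriately through the $v_i$, and run the same rank-counting dichotomy: if $V \cap H_\beta$ had more than one component over $K\langle\beta\rangle^{alg}$, there would be a component of $W$ carrying a type of rank $2\omega_V(t) - 2\binom{t+m-h}{m} + (\text{length of }\beta - 2)\cdot\binom{t+m}{m}$ coming from two points in distinct components, while the $v_1 = v_2$ case yields only $\omega_V(t) - \binom{t+m-h}{m} + (\text{length of }\beta - 1)\cdot\binom{t+m}{m}$. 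Comparing these two (via Sit's lemma for the upper bounds on Kolchin polynomials of tuples), the diagonal case is strictly dominated precisely when $\omega_V(t) - \binom{t+m-h}{m} > 0$ eventually in $t$, equivalently $\omega_V(t) \geq \binom{t+m}{m}$ with the extra wiggle coming from $\binom{t+m-h}{m} < \binom{t+m}{m}$. Hence the type of maximal rank on $W$ is unique, forcing a unique component of $V \cap H_\beta$, and geometric irreducibility follows.

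The main obstacle, as in the preceding theorem, is the careful comparison of Kolchin polynomials: one must track the number of independent \emph{differential-linear} conditions that ``$H_\beta$ contains $v_1$ and $v_2$'' imposes on $\beta$ when $v_1 \neq v_2$ versus $v_1 = v_2$, now in the order-$h$ setting where $\beta$ is a longer tuple (its entries being differential transcendentals) and where the conditions $\theta(f_\beta)(v_i) = 0$ range over $\theta \in \Theta(l-h)$ in the prolongation picture. The claim that for $v_1 \neq v_2$ these give two genuinely independent conditions (rather than one) requires the same genericity-of-$v_i$ argument used before, and this is where the hypothesis that $V$ has positive differential transcendence degree — or, in the weakened statement, is at least an algebraic curve — enters: it guarantees that the monomials evaluated at a generic point of $V$ are differentially independent enough to separate the two linear functionals on $\beta$. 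I expect the inequality chasing to be essentially mechanical once the degree-$(m-1)$ term $\binom{t+m}{m} - \binom{t+m-h}{m}$ is identified as the source of the extra slack; the one genuinely delicate point is verifying that all the relevant rank estimates remain sharp (equalities, not just inequalities) under the weakened hypothesis $\omega_V(t) \geq \binom{t+m}{m}$, which I would handle by the same appeal to Proposition \ref{lyingover} and the fact that generic independent points on $V$ realize nonforking extensions.
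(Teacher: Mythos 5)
Your proposal follows essentially the same route as the paper's own proof: the same auxiliary variety $W=\{(v_1,v_2,\beta)\,|\,v_i\in V,\ v_i\in H_\beta\}$, the same dichotomy comparing the Kolchin polynomial of the off-diagonal configuration (two independent generics of $V\cap H_\beta$) with the diagonal one, using Sit's lemma and Proposition \ref{lyingover}, and the same identification of the slack $\binom{t+m}{m}-\binom{t+m-h}{m}$ as what allows the hypothesis to be relaxed when $h\geq 1$. The one small discrepancy is your diagonal-case bound, which should read (as in the paper) roughly $\omega_{v_1/K}(t)+n_1\binom{t+m}{m}-\binom{t+m-h}{m}$ rather than $\omega_V(t)-\binom{t+m-h}{m}+(n_1-1)\binom{t+m}{m}$, but since either bound stays strictly below the maximal value $2\omega_V(t)-2\binom{t+m-h}{m}+n_1\binom{t+m}{m}$ under the stated hypothesis, the argument is unaffected.
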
 
\begin{proof} The proof is analogous to the previous proof, so we will be brief. The rank calculations are the only appreciable difference. We define $W= \{(v_1, v_2, \beta) \, | \, v_i \in V, \, v_i \in H_\beta \} \subseteq V \times V \times \m A^{n_1}$ where $H_\beta$ is the differential hypersurface given by $\sum \beta_i m_i=1,$ where the sum ranges over all monomials in $\bar x$ of order bounded by $h$ and degree bounded by $d_1$. Note that $$n_1 =\binom{n \cdot \binom{t+m}{m}+d_1}{d_1}-1.$$

Fixing two independent generic points on $V \cap H_\beta$, $v_1$ and $v_2$ and choosing coefficients of the generic differential hypersurface, relative to the condition that $H_ \beta$ contains $v_1$ and $v_2$ gives a tuple with Kolchin polynomial $$2 \omega _V (t) - 2 \binom{t+m-h}{m} + n_1 \binom{t+m}{m}.$$ Again, if there is more than one type on $ V \cap H_\beta$ with Kolchin polynomial $\omega _V (t) -  \binom{t+m-h}{m}$, then there is more than one type on $W$ with Kolchin polynomial $2 \omega _V (t) - 2 \binom{t+m-h}{m} +n_1 \binom{t+m}{m}.$

In general, for any (possibly) non-generic choice of $v_1,v_2 \in V$, if $v_1 \neq v_2, $ the Kolchin polynomial of $(v_1, v_2, \beta)$ is bounded by $\omega _{v_1,v_2/K}(t) + n_1 \binom{t+m}{m} - 2 \binom{t+m-h}{m}$. Thus, the only way for  $$\omega _{v_1,v_2/K}(t) + n_1 \binom{t+m}{m} - 2 \binom{t+m-h}{m} = 2 \omega _V (t) - 2 \binom{t+m-h}{m} + n_1 \binom{t+m}{m}$$ is to choose $v_1, v_2$ independent generics on $V$. 
If $v_1 = v_2$ then the Kolchin polynomial of $(v_1, v_2, \beta)$ is bounded by \begin{eqnarray*} \omega _{v_1/K}(t) +   n_1  \binom{t+m}{m}   -  \binom{t+m-h}{m}     <  2 \omega _V (t) - 2 \binom{t+m-h}{m} + n_1\binom{t+m}{m}.\end{eqnarray*} Thus by reasoning similar to the previous proof, $V \cap H$ is geometrically irreducible. 
\end{proof} 

\begin{rem} There are several notions of \emph{smoothness} in the context of differential algebraic geometry, coming from the differential arc spaces considered in \cite{MPSarcs2008, MSJETS} and from Kolchin's differential tangent spaces \cite{KolchinDAAG}. One can prove that generic intersections preserve any of these notions of smoothness (for more details, see\cite{freitag2012model}). 

\end{rem}

\section{Generic differential hypersurfaces through a given point}

The authors of \cite{GDIT} prove a geometric result which also generalizes to the partial differential setting. Their proof uses differential specializations. Our approach here is rather different, though a proof by suitably generalized methods of \cite{GDIT} is possible. Our proof is shorter, but we are using the machinery of stability theory. 

\begin{thm}\label{geohyphyp} Let $V$ be a differential algebraic variety of dimension $d.$ If the set of $d+1$ independent generic hyperplanes through $\bar a$ intersects $V$, then $\bar a \in V.$  
\end{thm}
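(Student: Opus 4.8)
The plan is to argue contrapositively, or rather by a rank/symmetry computation on a single auxiliary variety, following the same template as the two geometric irreducibility proofs above but now tracking the point $\bar a$. Suppose $\bar a \notin V$. Fix $d+1$ independent generic hyperplanes $H_1, \dots, H_{d+1}$ through $\bar a$, with coefficient tuples $\bar\beta_1, \dots, \bar\beta_{d+1}$; each $H_j$ is cut out by an affine-linear $\Delta$-polynomial in $\bar x$ whose coefficients are generic over $K$ subject to the single linear condition that it vanish at $\bar a$. The goal is to show $V \cap \bigcap_{j=1}^{d+1} H_j = \emptyset$. The idea is: intersecting $V$ with one generic hyperplane drops the Kolchin polynomial by $\binom{t+m}{m}$ (Theorem \ref{GDIT}, case $h=0$), and iterating this $d+1$ times — if at each stage the intersection remained nonempty — would force a differential algebraic variety of differential transcendence degree $-1$, a contradiction, \emph{provided} one checks that the linear conditions imposed by passing through $\bar a$ are independent of the generic directions of the hyperplanes when $\bar a \notin V$.

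More precisely, I would set up $W = \{(v, \bar\beta_1, \dots, \bar\beta_{d+1}) : v \in V,\ v \in H_{\bar\beta_j}\text{ for all }j,\ \bar a \in H_{\bar\beta_j}\text{ for all }j\}$ and compute the Kolchin polynomial of a generic point of a top-rank component in two ways. On one hand, choosing $v$ generic on $V$ first and then choosing each $\bar\beta_j$ generic subject to vanishing at both $v$ and $\bar a$: since $v \neq \bar a$ (as $v \in V$ and $\bar a \notin V$), the two conditions "$H_{\bar\beta_j}(v)=0$" and "$H_{\bar\beta_j}(\bar a)=0$" are two \emph{independent} linear conditions on $\bar\beta_j$, so each $\bar\beta_j$ contributes its generic Kolchin polynomial minus $2\binom{t+m}{m}$; the total is $\omega_{V/K}(t) + (d+1)\big(L(t) - 2\binom{t+m}{m}\big)$ where $L(t)$ is the Kolchin polynomial of a fully generic coefficient tuple. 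On the other hand, if $V \cap \bigcap_j H_j$ were nonempty for generic choices, then by Theorem \ref{GDIT} applied $d+1$ times the fibre of $W$ over the $\bar\beta$-coordinates would be nonempty of differential transcendence degree $d - (d+1) = -1$, which is impossible. The cleaner route, and the one I'd actually write, is to use Lascar's symmetry lemma / forking symmetry: nonemptiness of $V\cap\bigcap_j H_j$ for generic $\bar\beta_j$ says there is $v\in V$ with $v\ind_K$-related to $\bar\beta$ through the incidence, and symmetry of the conditions "$v$ and $\bar a$ both lie on $H_{\bar\beta_j}$" lets one swap the roles of $v$ and $\bar a$, concluding $\bar a$ lies in the Kolchin closure of the generic point of $V$, i.e. $\bar a \in V$, contradiction.

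Concretely the steps are: (1) reduce to showing that if $v$ is a generic point of $V$ over $K$ and $\bar a\notin V$, then for generic hyperplanes $H_1,\dots,H_{d+1}$ through $\bar a$, no such $v$ can simultaneously lie on all $H_j$; (2) observe that "$\bar a\in H_{\bar\beta_j}$ and $v\in H_{\bar\beta_j}$" is a symmetric pair of affine-linear conditions in $\bar\beta_j$, so the $\Delta$-transcendence degree drop they force on $\bar\beta_j$ is $2\binom{t+m}{m}$ exactly when $v\neq \bar a$ and is only $\binom{t+m}{m}$ when $v = \bar a$; (3) run the rank inequality on $W$ exactly as in the previous two proofs — computing $\omega_{(v,\bar\beta_1,\dots,\bar\beta_{d+1})/K}(t)$ via Sit's lemma — and observe that the only way a generic point of the relevant component of $W$ can also satisfy "$v\in\bigcap_j H_{\bar\beta_j}$" (which, by $d+1$ generic hyperplane sections of a $d$-dimensional variety, is the empty condition unless something degenerates) is for the degeneration $v=\bar a$ to occur; (4) conclude $\bar a\in V$.

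The main obstacle, I expect, is step (3): making precise that "$d+1$ generic hyperplanes through a \emph{fixed} point $\bar a$ meet a $d$-dimensional $V$" forces $\bar a$ itself into $V$, rather than merely forcing the intersection to be empty — i.e., correctly bookkeeping that the \emph{last} hyperplane section is where emptiness would occur unless the generic point of the running intersection has already specialized onto $\bar a$. This requires being careful that Theorem \ref{GDIT} is about \emph{generic} hyperplanes, whereas here the $H_j$ are generic only relative to the constraint of passing through $\bar a$; the fix is precisely the observation in step (2) that passing through a point \emph{off} $V$ costs an extra $\binom{t+m}{m}$ in the coefficient tuple, which is exactly the amount Theorem \ref{GDIT} would have removed from the variety — so the two accountings must balance, and they balance only if $\bar a\in V$. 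Secondarily, one must confirm that Lascar's symmetry lemma applies in the form needed (symmetry of nonforking over the algebraically closed differential field $K^{\mathrm{alg}}$, using geometric irreducibility to pass to $K^{\mathrm{alg}}$ if necessary); this is routine given Proposition \ref{MGFACT} and the remarks on forking in differential fields.
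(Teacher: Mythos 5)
Your main route is correct in substance and genuinely different from the paper's. The decisive observation you isolate is the right one: since $\bar a \notin V$ forces $v \neq \bar a$ for every $v \in V$, each coefficient tuple $\bar\beta_j$ of a hyperplane through both $\bar a$ and $v$ satisfies \emph{two} independent order-zero linear conditions over $K\langle \bar a, v\rangle$, whereas the hypothesis only charges it one; writing $L(t)$ for the Kolchin polynomial of an unconstrained coefficient tuple, the hypothesis gives the lower bound $\omega_{(v,\bar\beta)/K\langle\bar a\rangle}(t) \geq \omega_{\bar\beta/K\langle\bar a\rangle}(t) = (d+1)\bigl(L(t)-\binom{t+m}{m}\bigr)$ (Sit's lemma), while choosing $v$ first gives the upper bound $\omega_{(v,\bar\beta)/K\langle\bar a\rangle}(t) \leq \omega_{V/K}(t) + (d+1)\bigl(L(t)-2\binom{t+m}{m}\bigr)$ --- an inequality that does need justification, since Kolchin polynomials are only superadditive in general, but which holds here because the two solved-for coefficients of each $\bar\beta_j$ are order-zero rational functions of the remaining coefficients and of $v,\bar a$; comparing yields $(d+1)\binom{t+m}{m} \leq \omega_{V/K}(t)$, contradicting $\dim V = d$. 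Two caveats: the intermediate appeal to Theorem \ref{GDIT} ``applied $d+1$ times'' should simply be excised, since that theorem concerns unconstrained generic hypersurfaces and, as you yourself note, does not apply to hyperplanes forced through $\bar a$ (your step (2) accounting replaces it and suffices); and your ``cleaner route'' of swapping the roles of $v$ and $\bar a$ by forking symmetry is not an argument as stated. The paper's actual proof is different and shorter: it adjoins $\bar a$ to $K$, translates $\bar a$ to the origin so that the hyperplanes are $\sum c_i y_i = 0$, and cuts one hyperplane at a time --- for $\bar b$ generic on a component of $V \cap H_{\bar c}$ the relation $\sum c_i b_i = 0$ is nontrivial because $\bar b \neq 0$, so $RU(\bar c/K\langle\bar b\rangle) + \omega^m \leq RU(\bar c/K)$, and Lascar's symmetry lemma converts this into $RU(\bar b/K\langle\bar c\rangle) + \omega^m \leq RU(\bar b/K)$; induction then shows $d+1$ such cuts leave nothing, using only Proposition \ref{MGFACT} and symmetry. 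Your version buys an argument entirely inside Kolchin-polynomial bookkeeping, parallel to the Section \ref{section4} incidence-variety proofs and avoiding ordinal rank arithmetic, at the cost of the additivity care noted above; the paper's buys brevity and adapts verbatim to hypersurfaces of arbitrary order and degree, which is how Theorem \ref{genhyphyp} is obtained.
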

\begin{proof}
We note that the hypotheses imply that $\omega^m \cdot d \leq RU(V/K) < \omega^m \cdot(d+1)$ (see \ref{MGFACT}). 
Let $\bar a \notin V.$ First, we will argue the result in the case that $\bar a = (0, \ldots , 0).$ Any hyperplane through the origin is of the the form $\sum c_i y_i=0.$ We assume that the $c_i$ are independent differential transcendentals over $K.$ We denote this hyperplane by $H_ {\bar c}.$ Suppose that $\bar b$ is a generic point on one of the irreducible components of $V \cap H_{\bar c}$ over $K \langle \bar c \rangle.$ If $\bar d$ is a generic point on $V$ over $K,$ and for $I \subseteq \{1, 2, \ldots, n\}$ we have that $D=\{d_i \, | \, i \in I \}$ is a differential transcendence base for the field extension $K \langle \bar d \rangle /K,$ then the same property holds for $\bar b,$ that is $B=\{b_i \, | \, i \in I \}$ is a differential transcendence base for the field extension $K \langle \bar b \rangle /K.$ Thus, $RU(\bar b / K \langle B \rangle)< \omega^m.$ 

Now since $\bar b \in H_{\bar c } \cap V,$ we know that $\sum c_i b_i =0.$ We will bound $RU(\bar b / K \langle \bar c \rangle).$ Since over $K,$ $\bar c$ is an independent differential transcendental, $$RU(\bar c / K \langle \bar b \rangle) + \omega^m \leq  RU(\bar c /K).$$ 
But, then by Lascar's symmetry lemma \cite[chapter 19]{PoizatModel}
$$RU( \bar b / K \langle \bar c \rangle ) + \omega^m  \leq RU(\bar b /K).$$
Thus, the differential transcendence degree of $\bar b /K \langle \bar c \rangle$ is at least one less than that of $\bar d /K.$ 

In the case that $RU(V /K) < \omega^m,$ the above argument using Lascar's symmetry lemma shows that $V \cap H_{\bar c} =\emptyset$. The full result in the case $\bar a$ is the origin follows by induction. 
Now, suppose that $\bar a$ is some point besides $(0, \ldots , 0).$ If so, adjoin $\bar a$ to the field $K$ and consider $K \langle \bar a \rangle.$ A priori, perhaps $V$ is no longer irreducible; if not, arguing about each irreducible component would suffice.  Now, by translating the variety $V$ and the point $\bar a$, one can assume $\bar a = (0 , \ldots , 0).$ 
\end{proof}

Notice that this result, in the case that $d=0$, generalizes \ref{generichyp1}, which is itself a generalization of \cite[Theorem 1.7]{Pongembedding}. Notice a further advantage of our proof is that it is easily adapted to the case of generic differential hypersurfaces, giving the following theorem:

\begin{thm}\label{genhyphyp} Let $V$ be a differential algebraic variety of dimension $d.$ If the set of $d+1$ independent generic differential hypersurfaces (each of arbitrary degree and order) through $\bar a$ intersects $V$, then $\bar a \in V.$  
\end{thm}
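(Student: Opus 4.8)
The plan is to adapt the proof of Theorem \ref{geohyphyp} essentially verbatim, replacing hyperplanes $H_{\bar c}$ by generic differential hypersurfaces, and checking that every step used only features that survive the generalization. As before, it suffices by Proposition \ref{MGFACT} to rephrase the dimension hypothesis as $\omega^m \cdot d \leq RU(V/K) < \omega^m \cdot (d+1)$, and to reduce to the case $\bar a = (0,\dots,0)$: adjoining $\bar a$ to $K$, translating $V$ and $\bar a$, and (if $V$ becomes reducible over $K\langle \bar a\rangle$) arguing on each component separately. A generic differential hypersurface through the origin is cut out by $\sum_i c_i m_i = 0$ where the $m_i$ range over the differential monomials of the prescribed order and degree \emph{other than the constant monomial} (so that the origin lies on it), and $\bar c$ is a tuple of independent $\Delta$-transcendentals over $K$; write this as $H_{\bar c}$.

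First I would show the one-step rank drop: if $\bar b$ is a generic point of an irreducible component of $V \cap H_{\bar c}$ over $K\langle \bar c\rangle$, then $\dim(\bar b / K\langle \bar c\rangle) \leq \dim(V/K) - 1$. The key input is that $\bar b \ne 0$ on some coordinate (since $\bar b \in V$ and $\bar a = 0 \notin V$), so the single relation $\sum_i c_i m_i(\bar b) = 0$ is a nontrivial $\Delta$-algebraic dependence of $\bar c$ over $K\langle \bar b\rangle$; hence $RU(\bar c / K\langle \bar b\rangle) + \omega^m \leq RU(\bar c / K)$, exactly as in the hyperplane case where the monomials were just the $y_i$. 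Lascar's symmetry lemma \cite[chapter 19]{PoizatModel} then yields $RU(\bar b / K\langle \bar c \rangle) + \omega^m \leq RU(\bar b / K) \leq RU(V/K)$, which by Proposition \ref{MGFACT} forces the differential transcendence degree of $\bar b$ over $K\langle \bar c\rangle$ to be at most $d-1$. Applying this inductively to $d+1$ successive independent generic differential hypersurfaces through the origin drops the dimension below $0$ at each stage, so the total intersection is empty unless $\bar a = 0 \in V$; combined with the reduction above this proves the theorem.

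The subtlety to watch — the step I expect to be the main obstacle — is the same one flagged in Section \ref{section3}: model-theoretic ranks need not be invariant under Kolchin closure, so I must be careful that "generic point of a component of $V\cap H_{\bar c}$" and the Lascar-rank bookkeeping refer to the same object. The cleanest way around this is to never pass to closures unnecessarily: work with the complete type of $\bar b$ over $K\langle\bar c\rangle$ realizing maximal Lascar rank in the definable set $V\cap H_{\bar c}$, apply Lascar symmetry at the level of types, and only at the end translate the resulting Lascar-rank inequality back into differential transcendence degree via Proposition \ref{MGFACT}. One also needs the elementary observation that independence of the successive hypersurfaces $H_{\bar c^{(1)}}, \dots, H_{\bar c^{(d+1)}}$ over $K$ is preserved after conditioning on a point $\bar b$ of the running intersection, which again is just forking calculus. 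No genuinely new idea beyond the hyperplane case is required — the point of the remark is precisely that the argument is robust enough that "hyperplane" can be replaced by "differential hypersurface of arbitrary order and degree" with no change in structure, only in the indexing set of the coefficients $\bar c$.
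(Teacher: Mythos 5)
Your proposal is correct and is essentially the paper's own argument: the paper proves Theorem \ref{genhyphyp} precisely by adapting the proof of Theorem \ref{geohyphyp}, replacing $\sum c_i y_i$ by $\sum c_i m_i$ with the non-constant monomials and generic coefficients, and your filled-in details (nontriviality of the relation since some $b_j\neq 0$ and $y_j$ occurs among the $m_i$, Lascar symmetry, then induction with the coefficient tuples independent over the growing base) match that adaptation. Your extra care about working with types rather than Kolchin closures is a reasonable precaution but introduces no difference in substance.
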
 

We also note that the previous result and various results of this paper can also be seen to hold under the weaker hypothesis of quasi-genericity \cite[page 17 for the definition]{GDIT} of the differential hypersurfaces. 

\bibliography{/Users/freitagj/Dropbox/Research}{}
\bibliographystyle{plain}

\end{document}